\documentclass[12pt]{article}
\usepackage[T2A]{fontenc}
\usepackage[cp866]{inputenc}
\usepackage[tbtags]{amsmath}
\usepackage{amsfonts,amssymb,mathrsfs,amscd, amsthm,comment}
\usepackage{graphicx}
\usepackage{datetime}
\usepackage{mathrsfs}

\usepackage[breaklinks]{hyperref}

\hypersetup{
unicode=true,
colorlinks=true,
linkcolor=blue,
citecolor=blue,
urlcolor=blue,
filecolor=blue,
bookmarksnumbered=true,
pdfstartview=FitH,
pdfhighlight=/N
}

\newdimen\normalparindent
\newtheorem*{theorem*}{Theorem}
\newtheorem{theorem}{Theorem}
\newtheorem{Definition}{Definition}
\newtheorem{Corollary}{Corollary}
\newtheorem{Proposition}{Proposition}
\newtheorem{Statement}{Statement}
\newtheorem{Lemma}{Lemma}
\newtheorem{Question}{Question}

\newtheorem{Example}{Example}
\newtheorem{Remark}{Remark}

\def\AA{\mathcal A_\infty}

\def\TT{\mathbb T}

\let\myh\widehat
\let\myt\widetilde
\let\myo\overline

\def\CC{\mathbb C}
\def\NN{\mathbb N}
\def\ZZ{\mathbb Z}

\def\RRR{\mathbf R}
\def\GGG{\mathbf G}
\def\MM{\mathscr M}

\def\DD{\mathbb D}
\def\KKf{\mathcal K_{f_\infty}}
\def\ddc{\operatorname{dd^c}}

\def\pr{\operatorname{pr}}
\def\mcap{\operatorname{cap}}
\def\supp{\operatorname{supp}}
\def\Meas{\operatorname{Meas}}

\def\dist{\operatorname{dist}}
\def\diam{\operatorname{diam}}

\def\const{\operatorname{const}}
\def\Meas{\operatorname{Meas}}

\begin{document}


\title{\bf{
Existence of a ``maximal'' domain of meromorphy for an
analytic function outside a polar compact set}}

\author{A.~V.~Komlov}

\date{}

\maketitle

\markright{}


\begin{abstract}
Let E be a polar compact set in $\CC$. Let $f_\infty$ be a germ at $\infty$ that can be analytically continued along an arbitrary path $\gamma$ lying in $\myh\CC\setminus E$ and starting at the point $\infty$. 
In 1985--1986 Herbert Stahl presented his proof of a fundamental theorem on the convergence of diagonal Pad\'e approximants constructed from such germ $f_\infty$.
Since then, this theorem has borne his name.
A crucial role in his proof is played by the existence of a compact set $S_{f_\infty}$ of minimal logarithmic capacity among all compact sets $K$ such that the germ $f_\infty$ extends as a single-valued meromorphic function to $\myh\CC\setminus K$.
Unfortunately, the proof of this fact presented by H. Stahl in 1985 contains a crucial mistake. It is surprising that this mistake was made in the original Stahl's paper in 1985 and repeated in his last preprint in 2012, and, as far as we know, no one has pointed it out before! In this paper we explain this serious Stahl's mistake and present a correct proof of the existence of a compact set $S_{f_\infty}$. We emphasize that our proof will use ideas completely different from Stahl's ideas. Also we discuss the more general Stahl's conjecture about the existence of a compact set $S_{f_\infty}$ for an arbitrary germ $f_\infty$ without any assumption on the paths along that the germ $f_\infty$ can be continued. 
\end{abstract}

\setcounter{tocdepth}{1}\tableofcontents

\makeatletter
\renewcommand{\@makefnmark}{}
\makeatother

\section{Introduction}
\label{s1}

Let E be a polar compact set in $\CC$.
We denote the Riemann sphere by $\myh\CC$.
Let us denote the set of all analytic germs $f_\infty$ at $\infty$ that can be meromorphically continued along an arbitrary path $\gamma$ lying in $\myh\CC\setminus E$ and starting at the point $\infty$ and that don't continue to $\myh\CC\setminus E$ as a (single-valued) meromorphic function by $\AA(\myh\CC\setminus E)$. Always when we say about analytic continuation along a path, we mean meromorphic continuation along it.
In the series of five works \cite{St85-1}, \cite{St85-2}, \cite{St85-3}, \cite{St86-1}, \cite{St86-2} H. Stahl presented  the proof of Stahl's theorem on the weak asymptotic of (diagonal) Pad\'e polynomials and the convergence of (diagonal) Pad\'e approximants constructed from such germ $f_\infty\in \AA(\myh\CC\setminus E)$. 
Note that if $f$ is a meromorphic function in $\myh\CC\setminus E$, then Pad\'e approximants for $f$ converge in capacity to f on each compact set $K\subset\CC$, see~\cite{Pom73}.
To formulate Stahl's theorem we need to recall some notions and introduce some notation.

For each measure  $\mu$ in $\CC$ we denote its support by $\supp \mu$. 
For each set $G$ in $\CC$ we denote the set of all non-negative Borel measures with support on $G$ by $\Meas(G)$ and the set of all non-negative Borel unit measures with support on $G$ by $\Meas_1(G)$, i.e. $\mu\in\Meas(G)$ if $\supp \mu\subset G$ and $\nu\in\Meas_1(G)$ if $\supp \nu\subset G$ and $\nu(\CC)=1$.
We denote weak* convergence in the space of measures on $\CC$ by the symbol $\xrightarrow{*}$.

For each measure $\mu$ with compact support we define its logarithmic potential $V^{\mu}$ and its energy $E(\mu)$ as
\begin{equation}
\label{VE}
V^{\mu}(z):= \int\limits_\CC \log|z-t|\mu(t), \qquad
E(\mu):= \int\limits_\CC V^{\mu}(z)\mu(z).
\end{equation}
Often the sign ``minus'' is used in the definition of $V^{\mu}$, but we define it as in~\eqref{VE}. Thus, $V^{\mu}$ is a subharmonic function in $\CC$ and the energy of a delta-measure is $-\infty$. In such definition we follow Ransford's book~\cite{Ra95}, where all facts of the logarithmic potential theory that we are now presenting are contained.

For each compact set $K$ we define its energy $E(K)$ and its (logarithmic) capacity $\mcap K$ as
\begin{equation}
\label{E(K)}
E(K):=\sup\limits_{\mu\in\Meas_1(K)}E(\mu), \qquad
\mcap K:=e^{E(K)}.
\end{equation}
If $E(K)=-\infty$ ($\mcap K=0$) then $K$ is called polar.
It is well known that if $K$ is a non-polar compact set then there exists a unique measure $\lambda_K\in\Meas_1(K)$ such that $E(K)=E(\lambda_K)$. This measure $\lambda_K$ is called the equilibrium measure of the compact set $K$. Due to classical Frostman's theorem $\lambda_K$ is equivalently defined as the unique measure $\lambda_K\in\Meas_1(K)$ that satisfies the following equilibrium condition:
\begin{equation}
\label{V^lambda}
\begin{aligned}
V^{\lambda_K}(z) &=\const=:-\gamma_K \quad \mbox{n. e. on } K,\\
V^{\lambda_K}(z) &> -\gamma_K \quad \mbox{on } \CC\setminus K,
\end{aligned}
\end{equation}
where n.e. denotes nearly everywhere, i.e. up to a polar set. Moreover, it immediately follows from the definition~\eqref{VE} that $E(K)=-\gamma_K$.

For each non-polar compact set $K$ we denote the Green function of its complement $\CC\setminus K$ with the logarithmic singularity at $\infty$ by $g_{\myh\CC\setminus K}(z)$. Recall that $g_{\myh\CC\setminus K}(z)$ is the subharmonic function in $\CC$, harmonic in $\CC\setminus K$, $g_{\myh\CC\setminus K}(z)=0$ n. e. on $K$ and
\begin{equation}
\label{g(z)}
g_{\myh\CC\setminus K}(z)=\log |z|+\gamma_K+o(1) \quad \mbox{as } z\to\infty.
\end{equation}
The constant $\gamma_K$ is called the Robin constant. From the definition it immediately follows that $\gamma_K$ in~\eqref{g(z)} and~\eqref{V^lambda} is the same and
\begin{equation}
\label{g-V}
g_{\myh\CC\setminus K}(z)=V^{\lambda_K}(z)+\gamma_K.
\end{equation}

Recall the definition of convergence in capacity. We say that a sequence of functions $\varphi_n$ converges to a function $\varphi$ in capacity on a set $G$ if for any $\varepsilon>0$ we have
\begin{equation}
\label{cap)}
\lim\limits_{n\to\infty}\mcap\{z\in G: |\varphi_n(z)-\varphi(z)|>\varepsilon\}=0.
\end{equation}
We denote convergence in capacity by $\varphi_n\xrightarrow{\mcap}\varphi$.

Now recall the definition of (diagonal) Pad\'e polynomials. The (diagonal) Pad\'e polynomials of order $n$ constructed from a given germ $f_\infty$ at $\infty$ are two polynomials $P_{n,0}, P_{n,1}$ such that their degrees are not greater than $n$, $P_{n, j}\not\equiv 0$, and
\begin{equation*}
\label{Pade}
P_{n,0}(z)-P_{n,1}(z)f_\infty(z)=O\left(\frac{1}{z^{n+1}}\right)\quad \mbox{as } z\to\infty.
\end{equation*}
The fraction $P_{n,0}/P_{n,1}$ is called the Pad\'e approximant of order $n$.

For each polynomial $P$ we denote the counting measure of its zeroes by $\chi(P)$, i.e. 
\begin{equation*}
\label{chi(P)}
\chi(P)=\sum\limits_{x:P(x)=0}\delta_x,
\end{equation*}
where $\delta_x$ is the delta-measure at $x$.

For each domain $D$ we denote the space of (single-valued) meromorphic functions in $D$ by $\MM(D)$. If a given germ $f_\infty$ at $\infty$ is continued in some domain $D\ni\infty$ as a (single-valued) meromorphic function we write $f_\infty\in\MM(D)$. Now we introduce a key notion for us.

\begin{Definition}
\label{Def_KKf}
For a given analytic germ $f_\infty$ at $\infty$ we call a compact set $K\subset\CC$ admissible for $f_\infty$ if $\myh\CC\setminus K$ is connected and $f_\infty\in\MM(\myh\CC\setminus K)$. We denote the set of all admissible compact sets for $f_\infty$ by $\KKf$. 
\end{Definition}

Recall that for each compact set $E$ the space $\AA(\myh\CC\setminus E)$ is the set of all analytic germs $f_\infty$ at $\infty$ that can be meromorphically continued along an arbitrary path $\gamma$ lying in $\myh\CC\setminus E$ and starting at the point $\infty$ and such that $f_\infty\notin\MM(\myh\CC\setminus E)$.
Now we can formulate Stahl's theorem.

\begin{theorem*}[Stahl, 1985--1997]
Let $E$ be a polar compact set. Let a germ $f_\infty\in \AA(\myh\CC\setminus E)$. Let $P_{n,0}, P_{n,1}$ be Pad\'e polynomials constructed from $f_\infty$. Then

1) There exists a compact set $S\in\KKf$ such that $\mcap S = \inf\limits_{K\in\KKf}\mcap K$. Moreover, for any two such minimal compact sets $S_1, S_2$ we have $\mcap(S_1\setminus S_2)=\mcap(S_2\setminus S_1)=0$ and the intersection of all such minimal compact sets is also a minimal compact set $S^\circ\in\KKf$. 
In particular, $f_\infty\in\MM(\myh\CC\setminus S^\circ)$.

2) $\dfrac{1}{n}\chi(P_{n,j})\xrightarrow{*}\lambda_{S^\circ}$, where $\lambda_{S^\circ}$ is the equilibrium measure of $S^\circ$.

3) $\dfrac{P_{n,0}}{P_{n,1}}\xrightarrow{\mcap} f_\infty$ on each compact set $F\subset \CC\setminus S^\circ$.

4) $\dfrac{1}{n}\log\left|\dfrac{P_{n,0}}{P_{n,1}}-f_\infty\right|\xrightarrow{\mcap}e^{-2g_{\myh\CC\setminus S^\circ}}$
on each compact set $F\subset \CC\setminus S^\circ$.
\end{theorem*}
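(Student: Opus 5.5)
The plan is to concentrate on part~1, the existence of a compact set of minimal capacity in $\KKf$. Parts~2--4 will then follow Stahl's original scheme once part~1 and the resulting symmetry property of $S^\circ$ are available. For part~1 I would avoid Hausdorff limits of the admissible sets themselves. Capacity is only upper semicontinuous under Hausdorff convergence, since a Hausdorff limit can have strictly larger capacity, so minimality is not preserved that way. Instead I would pass to the limit in the Green functions.

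\emph{Step 1.} Fix a minimizing sequence $K_n\in\KKf$ with $\mcap K_n\downarrow c:=\inf_{K\in\KKf}\mcap K$, and note $c>0$. Indeed, a polar admissible compact set would give $f_\infty\in\MM(\myh\CC\setminus E')$ with $E'$ polar. Since the monodromy of $f_\infty$ along loops in $\myh\CC\setminus E$ is nontrivial and polar sets do not disconnect, this would force $f_\infty\in\MM(\myh\CC\setminus E)$, contradicting $f_\infty\in\AA(\myh\CC\setminus E)$. All $K_n$ may be assumed to lie in a fixed disc. Pass to a subsequence with $\lambda_{K_n}\xrightarrow{*}\lambda$ and $\gamma_{K_n}\to-\log c$. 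Set
\[
g(z):=V^{\lambda}(z)-\log c ,
\]
the candidate limit of $g_{\myh\CC\setminus K_n}=V^{\lambda_{K_n}}+\gamma_{K_n}$. By the lower envelope theorem, $g=\liminf g_{\myh\CC\setminus K_n}$ quasi-everywhere, and $g$ is harmonic off $\supp\lambda$. Let $D$ be the connected component of $\{g>0\}$ containing $\infty$, and put $S:=\myh\CC\setminus D$.

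\emph{Step 2 (admissibility).} Let $\gamma\subset D$ be a closed path based at $\infty$. By compactness, $g\ge\delta>0$ on $\gamma$. I want $g_{\myh\CC\setminus K_n}>0$ on $\gamma$ for large $n$, which means $\gamma\subset\myh\CC\setminus K_n$. Since $f_\infty\in\MM(\myh\CC\setminus K_n)$, continuation along $\gamma$ would then return the original germ, so $f_\infty$ is single-valued on $D$. Here lies the first technical difficulty. The convergence of the potentials is only in the sense of a $\liminf$, so the path must first be moved into a region where the convergence is locally uniform. I would use a small perturbation of $\gamma$ inside $D$, avoiding a polar exceptional set, together with the upper semicontinuity of $V^{\lambda}$ and the principle of descent, $\limsup V^{\lambda_{K_n}}(z_n)\le V^{\lambda}(z)$. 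If the descent inequality points the wrong way, I would instead use the admissibility of the hulls $K_n\cup\{g_{\myh\CC\setminus K_n}\le\varepsilon\}$. Finally, $g$ vanishes on $\partial D$ quasi-everywhere and has Robin constant $-\log c$. Comparing it with $g_{D}$ then gives $\mcap S\le c$, hence $\mcap S=c$.

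\emph{Step 3 (uniqueness and $S^\circ$).} For two minimal sets $S_1,S_2$, I would show that $S_1\cap S_2$, after filling holes, is again admissible. This is the step I expect to be the main obstacle. A loop in $\myh\CC\setminus(S_1\cap S_2)$ decomposes into arcs lying alternately in $\myh\CC\setminus S_1$ and $\myh\CC\setminus S_2$. The continuations on the two domains agree on the component of $(\myh\CC\setminus S_1)\cap(\myh\CC\setminus S_2)$ containing $\infty$, but this intersection need not be connected. I would try to exploit minimality here. Suppose the two branches disagree on some other component $U$. Replacing $S_1$ by $S_1\cup\myo U$ minus the part of $S_1$ adjacent to $U$ should then strictly decrease capacity, which would show that such components cannot occur. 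With admissibility of the intersection in hand, the strict monotonicity of capacity yields $\mcap(S_1\setminus S_2)=0$. Indeed, if this failed, $\lambda_{S_1}$ would charge $S_1\setminus S_2$, and the Green function would strictly increase after its removal. A Zorn or countable-intersection argument, using that capacity is continuous along decreasing sequences, then produces $S^\circ$.

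\emph{Parts 2--4.} From the minimality of $S^\circ$ I would derive Stahl's symmetry ($S$-)property, $\partial g/\partial n_+=\partial g/\partial n_-$ on the analytic arcs of $S^\circ$, by variations $z\mapsto z+\varepsilon h(z)$. Then $P_{n,1}$ satisfies non-Hermitian orthogonality on a contour homotopic to $S^\circ$. The Gonchar--Rakhmanov argument gives $\frac1n\chi(P_{n,j})\xrightarrow{*}\lambda_{S^\circ}$. Statements~3 and~4 then follow from the integral representation of the remainder $P_{n,0}-P_{n,1}f_\infty$ and standard capacity estimates.
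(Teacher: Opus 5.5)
\textbf{The main gap is in Step~2, and it is Stahl's own.} Your argument by limits of Green functions is the one the paper dissects in \S\ref{s2}, and it fails at the same point: the claim that $g$ vanishes quasi-everywhere on $\partial D$.

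\begin{itemize}
\item In your sign convention the envelope theorem gives $g=\limsup_n g_{\myh\CC\setminus K_n}$ q.e.\ (cf.~\eqref{g-St}), not $\liminf$. Nothing forces the right-hand side to vanish at points of $\partial D$.
\item Points of $\partial D$ are only limits of points where $g=0$, and upper semicontinuity gives no control there. (Also, $\{g>0\}$ need not even be open.)
\item Potentials need not be continuous quasi-everywhere. Example~\ref{Ex1} gives $V^\nu\ge d$ with $\{V^\nu=d\}$ dense in $\myo\DD(0,1)$, yet $V^\nu>d$ on a compact set of capacity $>1/2$.
\end{itemize}

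Whether $g=0$ q.e.\ on the boundary (equivalently, whether $\lambda$ is the equilibrium measure of its support) is open; see Questions~\ref{Q2} and~\ref{Q3}. Without it, your comparison with $g_D$ proves nothing, and $\mcap S>c$ is not excluded.

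The admissibility half of Step~2 also fails. The principle of descent gives only upper bounds on $g_{\myh\CC\setminus K_n}$, while you need lower bounds. Moreover, $g_{\myh\CC\setminus K_n}>0$ on $\gamma$ does not imply $\gamma\cap K_n=\varnothing$. Take $E=\{0,1,2\}$, $f_\infty=e^{1/z}+\sqrt{(z-1)/(z-2)}$, and the minimizing sequence $K_n=[1,2]\cup\{0\}$. Then $g=g_{\myh\CC\setminus K_n}=g_{\myh\CC\setminus[1,2]}$ is positive at $0\in K_n$. So $D=\myh\CC\setminus[1,2]$, and $f_\infty\notin\MM(D)$. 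Adjoining $E$ repairs this instance, but not the argument.

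Two smaller points in Step~1:
\begin{itemize}
\item Ruling out polar admissible sets does not give $\inf_{K\in\KKf}\mcap K>0$. That needs a uniform bound, which is Statement~\ref{St_c_0}.
\item Confining all $K_n$ to a fixed disc needs Proposition~\ref{Prop_D}.
\end{itemize}

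\textbf{The missing idea.} The paper never takes limits of Green functions. Instead it makes Hausdorff limits work, using the hypothesis $f_\infty\in\AA(\myh\CC\setminus E)$, which your Steps~1--2 barely use.
\begin{itemize}
\item It reduces to $E\subset K_n\subset\DD(0,1/3)$ with every component of $K_n$ meeting $E$ (Lemma~\ref{L_cut}).
\item It fattens to $K_{n,k}=K_n\cup\myo{E^k}$, so every component of $\myh{K_{n,k}}$ has capacity at least $1/(3k)$.
\item With this uniform lower bound, Hausdorff convergence preserves capacity (Statement~\ref{St_H_cap}, via Corollary~\ref{Cor_dist}). Admissibility passes to Hausdorff limits trivially (Proposition~\ref{Prop_H}).
\item Lemma~\ref{L_cap} shows the fattening costs at most a factor $1+\varepsilon$ once $\mcap\myo{E^k}<\delta(\varepsilon)$.
\end{itemize}
Hence the limits $\myh{F_k}$ are admissible with $\mcap\myh{F_k}\le(1+\varepsilon)c_0$ for large $k$, and $F=\bigcap_k\myh{F_k}$ is admissible with $\mcap F=c_0$. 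The tool you discarded because capacity is only upper semicontinuous is exactly the one that works, once components are kept uniformly non-small.

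The rest of the statement is not proved in the paper. It is attributed to Stahl, with the remark that his proof has gaps in other steps too. Your Step~3 merely asserts the key surgery ("should strictly decrease capacity"), and Parts~2--4 outline Stahl's programme rather than argue it.
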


The scheme of Stahl's proof of this theorem is the following.
In~\cite{St85-1} Stahl presented the proof of existence of a minimal compact set $S\in\KKf$. Moreover, his proof was given in the most general case, i.e. for an arbitrary germ $f_\infty$ without any assumption on the paths along that the germ $f_\infty$ can be continued. (We discuss it in details in \S\ref{s2}.) In~\cite{St85-2} Stahl presented the proof of the rest part of the item 1) of Stahl's theorem, in particular, the existence of the minimal compact set $S^\circ$. In~\cite{St85-3} Stahl presented the proof of the S-property of $S^\circ$. The S-property for $S^\circ$ means that there exists the polar compact set $E^\circ\subset S^\circ$ such that $S^\circ\setminus E^\circ$ is a union of open non-intersecting analytic arcs $\gamma_\alpha$, and on each arc $\gamma_\alpha$ we have $\frac{\partial g_{\myh\CC\setminus S^\circ}}{\partial n_+}=\frac{\partial g_{\myh\CC\setminus S^\circ}}{\partial n_-}$, where $\frac{\partial g_{\myh\CC\setminus S^\circ}}{\partial n_\pm}$ are normal derivatives of Green function $g_{\myh\CC\setminus S^\circ}$ from different sides of $\gamma_\alpha$.
The S-property plays the crucial role for Stahl's proof of the items 2)--4) of his theorem.
In~\cite{St86-1} Stahl presented the proof of item 2) of his theorem.  In~\cite{St86-2} Stahl described the asymptotic behaviour of some integral expressions containing $P_n$, $Q_n$ and $f_\infty$. But the items 3), 4) of Stahl's theorem are not contained in~\cite{St86-2} and do not directly follow from results of~\cite{St86-2}. It is clear that 3) follows from 4) in Stahl's theorem. Actually Stahl presented the proof of the item 4) only in 1997 in~\cite{St97}.

Unfortunately Stahl's proof of existence of a minimal compact set $S\in\KKf$ has a crucial mistake and it is wrong. Thus, it is the mistake on the first step of Stahl's proof of his theorem. This mistake was made in original paper~\cite{St85-1}, in~\cite{St97} Stahl referred to~\cite{St85-1}, and he repeated this mistake in~\cite{St12}. We explain this mistake in details in \S\ref{s2}. It should be noted that in 1994 Perevoznikova and Rakhmanov~\cite{PeRa} proved the existence of a minimal compact set $S\in\KKf$ in the more simple case when the set $E$ consists of a finite number of points in $\CC$ and $f_\infty\in\AA(\myh\CC\setminus E)$. Unfortunately, this preprint was never published. But proofs and results of it were represented in many papers. The most full presentation was given in~\cite[Section~3]{AptYa}. The main result of our paper is the following theorem, which gives a correct proof of the first step of Stahl's proof of his theorem.

\begin{theorem}
\label{t1}
Let $E$ be a polar compact set in $\CC$. Let $f_\infty\in\AA(\myh\CC\setminus E)$. Then there exists a compact set $S\in\KKf$ such that $\mcap S = \inf\limits_{K\in\KKf}\mcap K$.
\end{theorem}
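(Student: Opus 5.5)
Take a minimizing sequence $K_n\in\KKf$ with $\mcap K_n\downarrow c:=\inf_{K\in\KKf}\mcap K$. The infimum is positive: if some $K\in\KKf$ were polar, then, since $f_\infty$ is single-valued meromorphic off the polar set $K\cup E$ and continues along all paths avoiding $E$, a removability/monodromy argument would give $f_\infty\in\MM(\myh\CC\setminus E)$, contradicting $f_\infty\in\AA(\myh\CC\setminus E)$. We may also assume all $K_n$ lie in a fixed disc: replacing $K_n$ by the polynomially convex hull of $K_n\cap \overline{D}$ for a large disc $D\supset E$ keeps admissibility, because $f_\infty$ continues along any path in $\myh\CC\setminus E$, so continuation near $\infty$ can be pushed inward. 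By Blaschke selection in the Hausdorff metric, pass to a subsequence $K_n\to K$. Capacity is only upper semicontinuous under Hausdorff limits, $\mcap K\ge\limsup\mcap K_n$, which is the wrong direction, and $K$ need not be admissible. So the Hausdorff limit alone will not do, and I will work instead with Green functions.

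The key object is $g_n:=g_{\myh\CC\setminus K_n}$. These are nonnegative, harmonic off $K_n$ and of the form $\log|z|+\gamma_{K_n}+o(1)$ at $\infty$ with $\gamma_{K_n}\to-\log c$. By Harnack-type compactness, after passing to a subsequence, $g_n\to g$ locally uniformly on the open set $\Omega$ where the $g_n$ are eventually harmonic. Equivalently, via (\ref{g-V}), $\lambda_{K_n}\xrightarrow{*}\lambda$ with $V^{\lambda_{K_n}}\to V^\lambda$. I will then define the candidate
\[
S:=\supp\lambda\cup(\text{bounded components of the complement}),
\]
or, more robustly, $S:=\{g=0\}$ filled in. The main claim is that $\myh\CC\setminus S$ carries a single-valued meromorphic continuation of $f_\infty$. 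For $z\notin S$ we have $g(z)>0$, so $g_n>\delta$ near $z$ for large $n$. For any path from $\infty$ to $z$ in $\myh\CC\setminus S$ along which $g\ge\delta$, the path avoids $K_n$ for large $n$. The continuation along it is then the single-valued branch of $f_\infty$ on $\myh\CC\setminus K_n$. For two such paths both lie outside $K_n$ for large $n$, so the continuations agree. This gives single-valuedness on each set $\{g>\delta\}$ that is connected to $\infty$, and letting $\delta\to0$ gives $f_\infty\in\MM(\myh\CC\setminus\{g=0\})$. It remains to show that $\{g>0\}$ is connected and contains $\infty$, which holds because $g$ is harmonic and positive there with a single pole at $\infty$, with the maximum principle excluding bounded components.

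Finally, compare capacities. Since $g$ is harmonic on $\{g>0\}$, vanishes on its boundary at least quasi-everywhere, and satisfies $g=\log|z|+\lim\gamma_{K_n}+o(1)$, we get $g\le g_{\myh\CC\setminus S}$ plus a polar correction. Hence $\gamma_S\ge\lim\gamma_{K_n}=-\log c$, that is $\mcap S\le c$, and minimality gives equality. The main obstacle is this step: showing that $g$ genuinely has boundary values $0$ quasi-everywhere on $\partial\{g>0\}$, so that the domination $g\le g_{\myh\CC\setminus S}$ holds. Here the polarity of $E$ is essential, to rule out a set where continuation fails while $g$ stays positive. I would attack it by fine-topology arguments: $g=\lim g_n$ is a limit of functions vanishing q.e. on $K_n$, and I would use the lower semicontinuity of energies, $E(\lambda)\ge\limsup E(\lambda_{K_n})$ via weak* convergence, together with the fact that $\supp\lambda\subset S$. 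This already yields $E(S)\ge E(\lambda)\ge\log c$ directly, which may bypass the boundary-value issue entirely provided $\supp\lambda\subset S$ is justified.
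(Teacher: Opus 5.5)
Your route is essentially Stahl's, analyzed in \S\ref{s2}, and it breaks at the step you flag. Your $S$ is the filled-in closure of the set $F=\{g=0\}$ of \eqref{zero-F}, with $g$ as in \eqref{g-St}. The comparison $g\le g_{\myh\CC\setminus S}$ that would give $\mcap S\le c$ needs $g=0$ quasi-everywhere on $\myo F$, equivalently that $\lambda$ is the equilibrium measure of its support. No proof of this is known: it is the content of Questions~\ref{Q2} and~\ref{Q3}. Example~\ref{Ex1} shows that a nonnegative function $V^\nu-d$ need not vanish quasi-everywhere on the closure of its zero set.

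Your proposed bypass points the wrong way. The semicontinuity $E(\lambda)\ge\limsup E(\lambda_{K_n})=\log c$, together with $\supp\lambda\subset S$, gives $E(S)\ge\log c$, i.e.\ $\mcap S\ge c$. That already holds for every admissible set by the definition of $c$. You need $\mcap S\le c$, and a measure of large energy on $S$ gives no upper bound on $\mcap S$.

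The earlier steps either rest on the same missing fact or are unjustified. First, $g$ is harmonic on $\{g>0\}$ only if $g$ vanishes on $\supp\lambda$. Second, only $\limsup_n g_n\le g$ (with equality q.e.) is available, so $g(z)>0$ gives no lower bound for $g_n$ near $z$. It therefore does not keep $K_n$ off your paths: $K_n$ may contain pieces of capacity tending to $0$, invisible in $\lambda$ and $g$, lying across them. Note also that the hypothesis $f_\infty\in\AA(\myh\CC\setminus E)$ is never used in an essential way.

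The missing idea is that the Hausdorff route you discarded does work once the components are controlled. Admissibility passes to limits up to taking hulls: if $K_n\in\KKf$ and $K_n\xrightarrow{H}K$, then $\myh K\in\KKf$ (Proposition~\ref{Prop_H}), because a loop at positive distance from $K$ misses $K_n$ for large $n$. The semicontinuity defect of capacity disappears when all components of all $K_n$ have capacity $\ge\delta>0$ (Statement~\ref{St_H_cap}, via Corollary~\ref{Cor_dist}). This is where $f_\infty\in\AA(\myh\CC\setminus E)$ enters. By Lemma~\ref{L_cut} one may assume $E\subset K_n$ and that every component of $K_n$ meets $E$, so every component of $K_{n,k}=K_n\cup\myo{E^k}$ contains a disc of radius $1/(3k)$. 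Hausdorff limits $F_k$ of $\myh{K_{n,k}}$ then satisfy $\myh{F_k}\in\KKf$ and $\mcap\myh{F_k}=\lim_n\mcap\myh{K_{n,k}}$. Since $\mcap\myo{E^k}\to0$, Lemma~\ref{L_cap} gives $\mcap\myh{F_k}\le(1+\varepsilon)c$ for large $k$. Finally $\bigcap_k\myh{F_k}$ is admissible (Proposition~\ref{Prop_H} again) and is the required $S$.

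Two preliminary claims also need repair. Ruling out polar admissible sets does not give $c>0$, because the infimum need not be attained; Statement~\ref{St_c_0} is what supplies this. Truncating to a fixed disc is false as justified. Take $E=\{a,b\}\subset D$, $f_\infty=((z-a)(z-b))^{-1/2}$, and a Jordan arc $K$ from $a$ to $b$ that leaves $\myo D$ exactly once. Then $K\in\KKf$, but $K\cap\myo D$ consists of two disjoint arcs, and a loop around the one ending at $a$ changes the sign of $f_\infty$. Proposition~\ref{Prop_D} needs the bound $\mcap K_n\le R$ to control the relevant components and remove only a clopen part of $K_n$.
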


Note that our proof use ideas completely different from Stahl's ideas and in some sense our proof is a generalization of the proof of  Perevoznikova and Rakhmanov.
We use some statements from~\cite{PeRa} and~\cite{St12} and for the sake of completeness, we present proofs of these statements in \S\ref{s3}. 
Also in \S\ref{s2} we discuss the most general analogue of Theorem~\ref{t1} when $f_\infty$ is an arbitrary germ at $\infty$ (without any assumption on the paths along that the germ $f_\infty$ can be continued). We emphasize that Stahl tried to prove namely this analogue of Theorem~\ref{t1}. Nevertheless, Theorem~\ref{t1} is sufficient for the first step of Stahl's proof of his theorem. Note that Stahl's proof has also gaps in other steps.

The article is organized as follows. In \S\ref{s2} we explain the mistake in Stahl's proof of the analogue of Theorem~\ref{t1} and formulate several interesting open questions that follows from his reasoning. In \S\ref{s3} we prove many preliminary results. One part of these results is formulated in~\cite{St12}, but their proofs very often contain gaps and sometimes are incorrect. We give their full proofs. Another part of these results is contained in~\cite{PeRa}, we also present their proofs for the completeness of the presentation.
In the end of \S\ref{s3} we present additional preliminary results, which we will use for the proof of Theorem~\ref{t1}.
In \S\ref{s4} we present a proof of Theorem~\ref{t1}.

\section{Stahl's mistake and open questions}
\label{s2}

In~\cite[Theorem 1]{St85-1} and in~\cite[Proposition 6]{St12} Stahl tried to prove the following statement, but made a mistake. We formulate this statement as an open question.
\begin{Question}
\label{Q1}
Is the following true?
Let $f_\infty$ be an arbitrary analytic germ at $\infty$. Then there exists a compact set $S\in\KKf$ such that $\mcap S = \inf\limits_{K\in\KKf}\mcap K$.
\end{Question}
As we see, this is a more general statement than in our Theorem~\ref{t1}. Now we explain Stahl's mistake. He did the following.

Let $\{K_n\}$ be a minimizing sequence in the Question~\ref{Q1}, i.e. $K_n\in\KKf$ and
\begin{equation}
\label{c_0}
\lim_{n\to\infty}\mcap K_n = \inf\limits_{K\in\KKf}\mcap K =: c_0.
\end{equation}

Stahl says that we can suppose that all $K_n$ belong to some big (closed) disc $\myo\DD(0,R)$ with center at 0 and of radius $R$. Stahl formulates this in~\cite[Lemma~4]{St85-1} and in~\cite[Lemma~5]{St12}. But the proof of~\cite[Lemma~4]{St85-1} has a gap and the proof of~\cite[Lemma~5]{St12} is incomplete and repeats a part of the proof of~\cite[Lemma~4]{St85-1}. We give a full proof of this statement in Proposition~\ref{Prop_D}, see also Corollary~\ref{Cor_D}. So, let all $K_n\subset \myo\DD(0,R)$.

Then Stahl proves (with gaps), see ~\cite[Lemma~4]{St12}, that if $c_0=0$, then $f_\infty\in\MM(\myh\CC\setminus E_0)$, where $E_0$ is a polar compact set. 
We give a full proof of this fact in Statement~\ref{St_c_0}. Thus, if $c_0=0$, we have $S:=E_0$ and $\mcap S =0$. Therefore, further we suppose that $c_0>0$.

Now let $\lambda_{K_n}$ be the equilibrium measure of $K_n$. Since all $K_n\subset \myo\DD(0, R)$ and the space of unit measures in $\myo\DD(0, R)$ is weakly* compact, there exists some subsequence $\Lambda$ of $\NN$ such that $\lambda_{K_n} \xrightarrow{*}\lambda$, $n\in\Lambda$, where $\lambda\in\Meas_1(\myo\DD(0,R))$. Without lost of generality we put $\Lambda \equiv \NN$. Then it is well known (see~\cite[Lemma 3.3.3]{Ra95}) that
\begin{equation}
\limsup_{n\to\infty} V^{\lambda_{K_n}}\leq V^{\lambda}.
\end{equation}
Moreover (see~\cite[Ch.~I, Theorem 6.9]{SaTo}),
\begin{equation}
\label{envel_t}
\limsup_{n\to\infty} V^{\lambda_{K_n}}=V^{\lambda} \quad \mbox{n. e. in }\CC.
\end{equation}
Since $\gamma_{K_n}=-\log\mcap K_n$, where $\gamma_{K_n}$ is the Robin constant of $K_n$, it follows from~\eqref{envel_t},~\eqref{c_0} and~\eqref{g-V} that
\begin{equation}
\label{g-St}
\limsup_{n\to\infty} g_{\myh\CC\setminus K_n}(z)=V^{\lambda}(z)-\log c_0=:g(z) \quad \mbox{n. e. in }\CC.
\end{equation}
Let
\begin{equation}
\label{zero-F}
F:=\{z: g(z) =0\}.
\end{equation}

Then Stahl proves from~\eqref{g-St} (see~\cite[Lemmas 7--10]{St12}) that there exists a compact set $S\in\KKf$ such that $\mcap S =\mcap \myo F$, where $\myo F$ is the closure of $F$. In order for $S$ to be the desired compact set in Question~\ref{Q1}, we need $\mcap \myo F = c_0$. Stahl deduces this from the statement that $g=0$ nearly everywhere on $\myo F$. But he does a crucial mistake in the proof of this statement.

First of all, let us explain that if $g=0$ nearly everywhere on $\myo F$, then indeed $\mcap \myo F = c_0$. It is clear that if $g=0$ nearly everywhere on $\myo F$,  then $\mcap\myo F=\mcap F$.
If $\mcap F = 0$, we have $\mcap S =\mcap \myo F=0$ that contradicts $c_0>0$. So, let $\mcap F>0$ and $g=0$ nearly everywhere on $\myo F$. Consider the function $h:=g-g_{\myh\CC\setminus\myo F}$. Since by definition~\eqref{g-St} $g$ is subharmonic function in $\CC$ with the logarithmic singularity at $\infty$, 
we have that $h$ is a subharmonic function in the connected component of $\myh\CC\setminus\myo F$ containing $\infty$ and $\limsup\limits_{z\to\xi}h(z)= 0$ for nearly everywhere $\xi\in \myo F$. Then it follows from the extended maximum principle (see~\cite[Theorem 3.6.9]{Ra95}) that $h(z)\leq 0$  for all $z$ from the connected component of $\myh\CC\setminus\myo F$ containing $\infty$. In particular, for $z=\infty$. We have $h(\infty)=-\log c_0 + \log\mcap \myo F$. Thus, $\mcap\myo F\leq c_0$. Since $\mcap S =\mcap \myo F$ and $S\in\KKf$, we claim that $\mcap S =\mcap \myo F=c_0$. Q.E.D. 
Moreover, now we have $h(\infty)=0$, i.e. our subharmonic function $h$ reaches its maximum at an inner point of the domain, therefore $h\equiv 0$ in the connected component of $\myh\CC\setminus\myo F$ containing $\infty$. Thus, $g=g_{\myh\CC\setminus\myo F}$ and our limiting measure $\lambda$ is the equilibrium measure of $\myo F$ (and $S$).

Now let us explain Stahl's mistake in the proof of the statement that $g=0$ nearly everywhere on $\myo F$. Let us say right now that we don't know whether this statement is true or false. We discuss it below.
Stahl states that every potential is continuous nearly everywhere in $\CC$! Of course this is false! For example, see our example~\ref{Ex1} in the end of this section. (Evidently, this wrong statement implies $g=0$ nearly everywhere on $\myo F$.) Moreover, Stahl says that this wrong statement follows from (or is equivalent to) the correct capacitive analogue of classic Luzin's C-property for a logarithmic potential from~\cite[Ch. III, Theorem 3.6]{La66}: for an arbitrary measure $\mu$ and an arbitrary $\varepsilon>0$ there exists an open set $G$ such that $\mcap G<\varepsilon$ and the logarithmic potential $V^\mu$ is continuous on $\CC\setminus G$. Stahl writes this in his original paper~\cite[the last paragraph on p. 316]{St85-1} in 1985 and in the last arXiv paper~\cite[the last paragraph on p. 93]{St12} in 2012. It is surprising that during these 27 years between these two papers nobody had explained to Stahl that he was wrong in his view that C-property and continuity nearly everywhere are equivalent. Moreover, as far as we know, nobody has pointed out this Stahl's mistake before.

In example~\ref{Ex1} we construct such measure $\nu$ with $\supp\nu \subset \myo\DD(0,2)$ that everywhere $V^\nu\geq d$ for some constant $d$, and the capacity of the set $F_0:=\{z: V^\nu(z)=d\}$ is strictly less than the capacity of its closure. Of course, in particular, such potential $V^\nu$ is not continuous nearly everywhere on $\myo{F_0}$ and, consequently, on $\CC$. However, what is more important for us is the following. Consider the function $g_0^:=V^\nu-d$. Just like $g$~\eqref{g-St}, the function $g_0$ is non-negative and $g_0$ is the logarithmic potential of some measure up to a constant. But for $F_0=\{z: g_0(z) =0\}$ we have $\mcap F_0<\mcap\myo{F_0}$ and therefore $g_0$ is not equal to 0 nearly everywhere on $\mcap\myo F$. It is similar to contradiction of Stahl's statement that $g=0$ nearly everywhere on $\myo F$. However, we don't know whether we can satisfy the left-hand side of~\eqref{g-St} for $g_0$, and, moreover, show that $\nu$ is the weak* limit of equilibrium measures $\lambda_n$ of some compact sets $K_n$ that satisfy~\eqref{c_0}.

From the other side we have the following arguments, which show that Stahl's statement that $g=0$ nearly everywhere on $\myo F$ may be true. We see above that if this statement is true, then the limiting measure $\lambda$ is the equilibrium measure of $\myo F$. The opposite is also true. If $\lambda$ is the equilibrium measure of its support, then it immediately follows from Frostman's theorem~\eqref{V^lambda} that $g=0$ nearly everywhere on $\myo F$. (Moreover, it is well known that the logarithmic potential of any equilibrium measure is continuous nearly everywhere in $\CC$.) Thus, if we could prove that $\lambda$ is the equilibrium measure of its support, we would prove $g=0$ nearly everywhere on $\myo F$ and fix the gap in Stahl's proof. So, we have the following open question. 
\begin{Question}
\label{Q2}
Let $f_\infty$ be an arbitrary analytic germ at $\infty$. 
Let $K_n\in \myo\DD(0,R)$ for some $R<\infty$, $K_n\in\KKf$ and 
\begin{equation*}
\lim_{n\to\infty}\mcap K_n = \inf\limits_{K\in\KKf}\mcap K =: c_0,
\end{equation*}
where $c_0>0$.
Let $\lambda_{K_n} \xrightarrow{*}\lambda$ as $n\to\infty$, where $\lambda_{K_n}$ is the equilibrium measure of $K_n$ and $\lambda$ is some measure.
Is it true that $\lambda$ is the equilibrium measure of its support?
\end{Question}

Moreover, we can formulate a more general question without any relation with a germ $f_\infty$.
\begin{Question}
\label{Q3}
Let compact sets $K_n\subset\myo\DD(0,R)$ for some $R<\infty$ and $\mcap K_n\geq c>0$. 
Let $\lambda_{K_n}$ be the equilibrium measure of $K_n$ and $\lambda_{K_n} \xrightarrow{*}\lambda$ as $n\to\infty$, where $\lambda$ is some measure.
Is it true that $\lambda$ is the equilibrium measure of its support?
\end{Question}

The Question~\ref{Q3} seems very natural for the logarithmic potential theory. Nevertheless, we don't know the answer. Evidently, the positive answer to Question~\ref{Q3} implies the positive answer to Question~\ref{Q2}, and, as we explain above, the positive answer to Question~\ref{Q2} implies the positive answer to Question~\ref{Q1}. But, unfortunately, we don't know the answer to any of these questions.

\begin{Example}
\label{Ex1}
Let $\{r_n\}$, $n\in\NN$ be a sequence of all points in (closed) unit disc $\myo\DD(0,1)$ with rational coordinates. Let us consider the measure 
\begin{equation}
\label{ex1_mu}
\mu := \dfrac{6}{\pi^2}\sum_{n=1}^\infty \delta_{r_n}. 
\end{equation}
Evidently, $\supp \mu =\myo\DD(0,1)$ and $\mu(\myo\DD(0,1))=1$. Since $V^\mu$ is a subharmonic function, its exceptional set $F_\mu:=\{z:V^\mu(z)=-\infty\}$ is polar, i.e. $\mcap F_\mu=0$. But $F_\mu$ contains all the points $r_n$, therefore $\myo{F_\mu}=\myo\DD(0,1)$ and $\mcap \myo{F_\mu}=1$. It is a good example but we would like to replace $-\infty$ with some finite number. For this goal we consider a truncation of $V^\mu$.

For each constant $c$ we define the set
\begin{equation*}
J_c:= \{z\in \myo\DD(0,1): V^\mu(z)\geq c\}. 
\end{equation*}
Since each subharmonic function (in paricular, $V^\mu$) is upper continuous, the sets $J_c$ are compact.
We have
\begin{equation*}
\myo\DD(0,1)\setminus F_\mu = \bigcup\limits_{-\infty<c\leq t_0} J_c \quad\mbox{and } J_{t_2}\supset J_{t_2} \mbox{ for } t_2<t_1,   
\end{equation*}
i.e. the set $\myo\DD(0,1)\setminus F_\mu$ is the limit of an increasing sequence of Borel sets. Therefore (see~\cite[Theorem 5.1.3]{Ra95}),
\begin{equation}
\label{lim_J}
\mcap (\myo\DD(0,1)\setminus F_\mu) = \lim_{t\to-\infty}\mcap J_t.   
\end{equation}
Since $\mcap F_\mu = 0$, we have $\mcap (\myo\DD(0,1)\setminus F_\mu)=1$ and it follows from~\eqref{lim_J} that $\lim_{t\to-\infty} \mcap J_t =1$.
Let us choose such $d<0$ that $\mcap J_{d/2}> 1/2$ and consider the function $\max (d, V^\mu(z))$. It is a subharmonic function in $\CC$ (as the maximum of two subharmonic functions) and it is $\log|z|+o(1)$ as $z\to\infty$. Therefore this function is the logarithmic potential of the measure $\nu:=\ddc \max (d, V^\mu(z))$, where $\ddc$ is the generalized Laplace operator. So,
\begin{equation}
\label{Ex1_V^nu}
V^{\nu}(z)=\max (d, V^\mu(z)).
\end{equation}
Put $F_\nu:=\{z:V^\nu(z)=d\}$.
The set $F_\nu$ contains all points at that $V^\mu= -\infty$, in particular, all the points of $\myo\DD(0,1)$ with rational coordinates. Therefore $\myo{F_\nu}\cap \myo\DD(0,1)=\myo\DD(0,1)$. But $J_{d/2}\subset \myo\DD(0,1)$, $\mcap J_{d/2}>1/2$ and $V^\nu>d$ on $J_{d/2}$.  

Thus we construct a measure $\nu$ such that $V^\nu\geq d$ on whole $\CC$ and $V^\nu$ is not equal $d$ nearly everywhere on the closure of the set $F_\nu=\{z:V^\nu(z)=d\}$. Moreover, it is clear from our construction that for each $\varepsilon<1$ we can construct a measure $\nu$ such that the set $\{z\in\myo F_\nu: V^\nu(z)>d\}$ contains a set $E$ of capacity greater then $\varepsilon$. In our case $\varepsilon = 1/2$ and $E=J_{d/2}$.

Notice that $\supp \nu \subset \myo\DD(0,2)$. Indeed, it is clear from the definition~\eqref{ex1_mu} that $V^\mu>0$ outside $\myo\DD(0,2)$ and $V^\mu$ is harmonic outside $\myo\DD(0,1)$. Consequently, it follows from~\eqref{Ex1_V^nu} and our choice $d<0$ that $\supp \nu \subset \myo\DD(0,2)$.

\end{Example}

\section{Preliminary results}
\label{s3}

In this section we present preliminary results from the general topological theory and from the logarithmic potential theory. Many results from the former part are contained in~\cite{St12} (sometimes without proofs) and many results from the latter part are contained in~\cite{PeRa}. Nevertheless we present these results with full proofs for the completeness of the presentation.

We denote the (open) disc with a centre at $x$ and of radius $R$ by $\DD(x, R):=\{z\in\CC: |z-x|<R\}$. 

We denote the circle with a centre at $x$ and of radius $R$ by $\TT(x, R):=\{z\in\CC: |z-x|=R\}$.

For any set $G$ we denote its closure by $\myo G$.  

For any set $G$ we denote its boundary by $\partial G$.  

We denote the distance in Euclidean metric by $\dist(\cdot,\cdot)$.

We denote the (open) $\varepsilon$-neighbourhood in Euclidean metric of any set $G\subset\CC$ by $G^\varepsilon$, i.e. 
\begin{equation*}
G^{\varepsilon}:=\{z: \dist(z, G)<\varepsilon\},
\end{equation*}

For an arbitrary compact set $F\in\CC$ we denote the unbounded component of its complement by $F_\infty$.

For an arbitrary compact set $F$ we denote its polynomially convex hull by $\myh F$, i.e. $\myh F:=\myh\CC\setminus F_\infty$.

We begin with a simple and well-known Lemma~\ref{L0}, but we don't know the direct link on it.

\begin{Lemma}
\label{L0}
For any connected compact set $K\subset \CC$ with connected complement and any $\varepsilon>0$ there exists a simply connected neighbourhood $U$ of $K$ such that $U\subset K^\varepsilon$ and $\partial U$ is a closed Jordan curve. 
\end{Lemma}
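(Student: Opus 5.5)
The plan is to build $U$ as a complementary component of a level set of the Green function of the complement of $K$, and to use the Riemann map of the complement to make its boundary a Jordan curve.

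First, the degenerate case. If $K$ is a single point (or, more generally, polar), a small disc centred at a point of $K$ does the job. Indeed, a connected compact set containing two distinct points is a continuum with positive diameter, hence non-polar. So it suffices to treat $K$ a single point, and then $U=\DD(x,\varepsilon)$ works.

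Now let $K$ contain more than one point. The domain $\Omega:=\myh\CC\setminus K$ is connected. It is also simply connected in $\myh\CC$, since its complement $K$ is connected. Because $K$ is not a single point, the Riemann mapping theorem gives a conformal map
\[
\Phi:\myh\CC\setminus\myo\DD(0,1)\to\Omega,\qquad \Phi(\infty)=\infty .
\]
For $r>1$, set $\Gamma_r:=\Phi(\TT(0,r))$. This is an analytic Jordan curve in $\Omega$ and the image of a circle under a homeomorphism, so it is Jordan. Let $U_r$ be the bounded component of $\CC\setminus\Gamma_r$. Then
\[
U_r=K\cup\Phi(\{1<|w|<r\}) .
\]
To see this, note that $\Phi(\{|w|>r\})$ is connected, unbounded and disjoint from $\Gamma_r$, so it is the unbounded component. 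The remaining set, $K\cup\Phi(\{1<|w|\le r\})\setminus\Gamma_r$, is therefore the bounded complementary region. Since $\Gamma_r$ is a Jordan curve, the Jordan–Schoenflies theorem shows that $U_r$ is a simply connected open set containing $K$ with $\partial U_r=\Gamma_r$.

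It remains to check that $U_r\subset K^\varepsilon$ for $r$ close to $1$. This is the only real point, and it goes by compactness and contradiction. Suppose there are $r_n\downarrow1$ and points $z_n\in U_{r_n}\setminus K^\varepsilon$. Then $z_n=\Phi(w_n)$ with $1<|w_n|<r_n$. The $z_n$ lie in the bounded set $U_{r_1}$, so a subsequence converges to some $z\notin K^\varepsilon$; in particular $z\in\Omega$. Then $w_n=\Phi^{-1}(z_n)\to\Phi^{-1}(z)$, which lies in $\{|w|>1\}$. This contradicts $|w_n|\to1$.

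Equivalently, one can take $U=\{z: g_{\myh\CC\setminus K}(z)<\log r\}$, since $g_{\myh\CC\setminus K}=\log|\Phi^{-1}|$. Choosing such an $r$ completes the proof. The main obstacle is only ensuring that the boundary is a genuine Jordan curve. This is why I use level curves of the Riemann map rather than arbitrary neighbourhoods: it avoids any discussion of the boundary behaviour of $K$ itself.
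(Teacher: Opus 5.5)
Your proof is correct and is essentially the paper's argument. Both use the Riemann map of $\myh\CC\setminus K$ (the paper normalizes it onto $\DD(0,1)$ with $\infty\mapsto 0$, you onto $\myh\CC\setminus\myo\DD(0,1)$ with $\infty\mapsto\infty$), take $U$ bounded by the image of a circle close to the unit circle, and prove $U\subset K^\varepsilon$ by the same compactness-and-continuity contradiction; your version additionally identifies $U_r=K\cup\Phi(\{1<|w|<r\})$ and applies the contradiction to all of $U_r$ rather than only to the level curve, which makes explicit the inclusion the paper states ``by construction''.
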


\begin{proof}
If $K$ is a point, the statement is trivial. Let $K$ be not a point. 
By Riemann mapping theorem, there exists a conformal map $\varphi$ of $\myh\CC\setminus K$ onto the unit disc $\DD(0,1)$ that maps $\infty$ to~$0$. For each $r\in(0,1)$ we put
\begin{equation*}
\rho(r):=\max\limits_{|\xi| = r}\dist(\varphi^{-1}(\xi), K).
\end{equation*}
Let us show that $\rho(r)\to 0$ as $r\to 1$. Let us assume the opposite. Then there exist a sequence of numbers $r_n\in(0,1)$, $n\in\NN$, a sequence of points $\xi_n\in\DD(0, R)$, $n\in\NN$, and $\delta>0$ such that $r_n\to 1$, $|x_n|=r$ and $\dist(\varphi^{-1}(x_n), K)>\delta$. Choosing subsequences we assume that $\varphi^{-1}(x_n)$ converges to some point $z_0\in\CC\setminus K$. Since $\varphi$ is continuous, in particular, at $z_0$, we have $x_n\to \varphi(z_0)$ as $n\to\infty$. But $|x_n|\to 1$ and $|\varphi(z_0)|<1$. Contradiction.

Let us choose $r$ such that $\rho(r)<\varepsilon$. Let us show that $U:=\myh\CC\setminus\varphi^{-1}(\myo\DD(0,r))$ is the desired neighbourhood. Indeed, $U\subset K^\varepsilon$ by construction. Moreover, since the image of a closed Jordan curve under a conformal map is also a closed Jordan curve, we have that $\partial U$ is a closed Jordan curve.
\end{proof}

The following Lemma~\ref{L1} is some technical statement from the theory of analytic continuation in the complex plane.

\begin{Lemma}
\label{L1}
Let $f_\infty$ be an analytic germ at $\infty$. Let $\gamma_1$ and $\gamma_2$ be paths from $\infty$ to some point $z_0\in\CC$ such that there exist analytic continuations $f_{z_0}^1$ and $f_{z_0}^2$ of $f_\infty$ along $\gamma_1$ and $\gamma_2$ to $z_0$ respectively and $f_{z_0}^1\ne f_{z_0}^2$. Then there exist a point $\myt z_0\in\CC$ and Jordan non-intersecting at inner points paths $\myt\gamma_1$ and $\myt\gamma_2$ from $\infty$ to $\myt z_0$ such that there exist analytic continuations $f_{\myt z_0}^1$ and $f_{\myt z_0}^2$ of $f_\infty$ along $\myt\gamma_1$ and $\myt\gamma_2$ to $\myt z_0$ respectively and $f_{\myt z_0}^1\ne f_{\myt z_0}^2$. 
\end{Lemma}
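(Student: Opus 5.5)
We reduce the general situation to one where the two paths can be replaced by Jordan paths, in three steps.

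\emph{Step 1: make the paths polygonal.} Analytic continuation along a path is stable under small homotopies. Cover $\gamma_j$ by finitely many discs in which the successive elements of the continuation converge. Then replace $\gamma_j$ by a polygonal path $\gamma_j'$ with vertices on $\gamma_j$, chosen so that each segment stays inside the disc of the corresponding element. Continuation along $\gamma_j'$ gives the same element $f^j_{z_0}$ at $z_0$. The initial part near $\infty$ can be taken as a ray from $\infty$ inside the disc of convergence of $f_\infty$.

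\emph{Step 2: remove self-intersections and reduce to finitely many crossings.} By a further small perturbation we may assume $\gamma_1'$ and $\gamma_2'$ are polygonal, in general position, and meet each other only in finitely many transversal points. We may also assume they share only a common initial ray near $\infty$. That ray can be arranged so that the paths separate immediately: take two distinct rays inside the disc $\{|z|>R_0\}$ where $f_\infty$ converges, which is harmless since the germ is single-valued there.

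If $\gamma_j'$ has a self-intersection, say it passes through a point $w$ at two times $s<t$, we cut out the loop between $s$ and $t$. This may change the element at $z_0$, so we argue by a loop-erasing dichotomy. Compare the element at $w$ obtained at time $s$ with that obtained at time $t$. If they differ, then the two initial subpaths of $\gamma_j'$ ending at $w$ give two different elements at $w$. These subpaths have fewer pieces, so we pass to this new configuration. If they coincide, erasing the loop does not change the element at $z_0$. Since the number of segments is finite, the descent terminates.

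\emph{Step 3: handle mutual intersections.} Let $w$ be the first point along $\gamma_1'$, after leaving the neighbourhood of $\infty$, that also lies on $\gamma_2'$. We take $\myt z_0$ to be such a point and $\myt\gamma_1,\myt\gamma_2$ to be the initial subarcs up to $w$. These are disjoint at inner points on the $\gamma_1$-side, but not yet on the $\gamma_2$-side. The key point is then a dichotomy. If the elements at $w$ obtained along the two subarcs differ, we are done with $\myt z_0=w$. If they coincide, we replace the initial part of $\gamma_2'$ up to $w$ by the initial part of $\gamma_1'$ up to $w$. This leaves the element at $z_0$ unchanged, while the new pair of paths shares a longer common initial arc, and we continue with the remainder.

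To make this rigorous we induct on the total number of intersection points, including self-intersections. At each step we either terminate with a Jordan pair, or strictly decrease the count after re-separating the now-common initial arc by a small perturbation. During such a perturbation the continued elements along the perturbed paths remain equal to the old ones, by Step 1 stability.

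\emph{Main obstacle.} The hard part is the bookkeeping in Steps 2--3: showing that the modification strictly decreases a finite complexity measure while preserving the inequality of terminal elements. The plan is to choose this measure as the number of vertices plus the number of intersection points of the polygonal paths. The final pair must then be Jordan paths that meet only at $\infty$ and $\myt z_0$.
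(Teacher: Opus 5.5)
Your overall route is the paper's: polygonal approximation, a loop-erasing dichotomy at self-intersections, and then the dichotomy at the first common point $w$ of two Jordan paths. In the ``coincide'' case you replace the initial part of $\gamma_2'$ by a slightly shifted copy of $\gamma_1'|_{[0,t_w]}$ joined to $\gamma_2'$ just after $w$. Step~3 works as you describe it. Once both paths are Jordan, the two subarcs up to $w$ are automatically disjoint at inner points on both sides, so the ``differ'' case is terminal. The new second path is Jordan and meets $\gamma_1'$ only at $\infty$ and at points where $\gamma_2'$ met it after passing $w$, so the number of mutual intersection points strictly drops.

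The gap is in the ``differ'' branch of Step~2. Take an arbitrary self-intersection $\gamma_j'(s)=\gamma_j'(t)$ with $s<t$; the new pair $\gamma_j'|_{[0,s]}$, $\gamma_j'|_{[0,t]}$ need not be simpler. If $t=1$, the second path is all of $\gamma_j'$. The two paths also share the whole arc $\gamma_j'([0,s])$, which may contain further self-crossings of $\gamma_j'$. Separating this shared arc into two nearby copies then creates new crossings between the copies near each such self-crossing, so your measure (vertices plus intersection points) can increase. Picking the same self-intersection again just returns the same pair. So ``the descent terminates'' is not justified.

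The missing idea, which the paper uses, is to choose the self-intersection with the smallest second time, i.e. $t$ minimal with $\gamma_j'(t)\in\gamma_j'([0,t))$. Then $\gamma_j'$ is injective on $[0,t)$, so the two subpaths coincide only along the Jordan arc $\gamma_j'([0,s])$ and at their endpoint $w$. Now replace one copy of this arc by a nearby parallel arc that still reaches $w$. Make the other path detour around $w$ near time $s$, on the side away from the final branch $\gamma_j'|_{[t-\eta,t]}$. This gives the required Jordan pair at once, with the same terminal elements.

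With this choice the ``differ'' case ends the proof immediately. Only the ``coincide'' case iterates; loop erasure strictly lowers the number of self-intersections counted with multiplicity and creates no new intersections of either kind. Step~2 therefore ends with either the desired pair or two Jordan paths to which your Step~3 applies.
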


\begin{proof}
It immediately follows from the theorem on analytic continuation along close paths that there exist piecewise linear paths $\gamma^\circ_1$ and $\gamma^\circ_2$ such that
there exist analytic continuations of $f_\infty$ to $z_0$ along paths $\gamma^\circ_1$ and $\gamma^\circ_2$, which give $f_{z_0}^1$ and $f_{z_0}^2$, respectively.
Moreover, it is clear that we can choose $\gamma^\circ_1$ and $\gamma^\circ_2$  such that each of them self-intersects at finite number of points and $\gamma^\circ_1$ intersects $\gamma^\circ_2$ also at finite number of points. For simplicity of notation, further we will denote these $\gamma^\circ_1, \gamma^\circ_2$ by$\gamma_1, \gamma_2$.

Let $\gamma_1: [0,1]\to\CC$ self-intersect. Let $\gamma_1(t_2)$ be the smallest (with respect to parameter $t$ of $\gamma_1(t)$) self-intersection point of $\gamma_1$, i.e. $\gamma_1(t_1) = \gamma_1(t_2)$ for some $t_1<t_2$ and $\gamma_1(s_1) \ne \gamma_1(s_2)$ for all $s_1,s_2<t_2$.
Note that $\gamma_1(t), t\in[t_1,t_2]$, is a closed Jordan path.
Let the analytic continuations of $f_\infty$ along $\gamma_1(t), t\in[0,t_1]$, and along $\gamma_1(t), t\in[0,t_2]$, give two different germs at the point $\gamma_1(t_1)=\gamma_1(t_2)$. Then it is clear that we can choose paths $\myt\gamma_1, \myt\gamma_2$  from $\infty$ to $\gamma_1(t_1)$ close to $\gamma_1(t), t\in[0,t_1]$, and $\gamma_1(t), t\in[0,t_2]$, respectively, such that $\myt\gamma_1, \myt\gamma_2$ intersect each other only at their start and end points and the analytic continuation of $f_\infty$ along $\myt\gamma_1$ coincides with the analytic continuation along $\gamma_1(t), t\in[0,t_1]$, and the analytic continuation of $f_\infty$ along $\myt\gamma_2$ coincides with the analytic continuation along $\gamma_1(t), t\in[0,t_2]$. Thus, in this case $\myt\gamma_1, \myt\gamma_2$ are desired paths.
Let now the analytic continuations of $f_\infty$ along $\gamma_1(t), t\in[0,t_1]$, and along $\gamma_1(t), t\in[0,t_2]$, give the same germ at the point $\gamma_1(t_1)=\gamma_1(t_2)$. Then we can cut the loop $\gamma_1(t), t\in[t_1, t_2]$. Indeed, the path $\gamma_1(t), t\in[0,t_1]\cup [t_2,1]$, has strictly fewer self-intersection points than $\gamma_1(t), t\in[0,1]$, and the analytic continuation along both these paths give the same germ $f_{z_0}^1$. Now we can repeat all this procedure for the path $\gamma_1(t), t\in[0,t_1]\cup [t_2,1]$, an so on. As a result we get either desired paths $\myt\gamma_1, \myt\gamma_2$ or a Jordan path $\breve\gamma_1$ from $\infty$ to $z_0$ such that there exists the analytic continuation of $f_\infty$ along $\breve\gamma_1$ that gives the germ $f_{z_0}^1$.

Now we do the same procedure for $\gamma_2$. As a result we have either desired paths $\myt\gamma_1, \myt\gamma_2$ or two Jordan paths $\breve\gamma_1, \breve\gamma_2$ from $\infty$ to $z_0$ such that $\breve\gamma_1, \breve\gamma_2$ intersect each other at finite numbers of points and there exist the analytic continuations of $f_\infty$ along $\breve\gamma_1$ and $\breve\gamma_2$ that give the germs $f_{z_0}^1$ and $f_{z_0}^2$, respectively.
Let us consider the second situation. Let $\breve\gamma_1: [0,1]\to\myh\CC$ and $\breve\gamma_2: [0,1]\to\myh\CC$. Let $t_1, s_1\in(0,1]$ be such that $\breve\gamma_1(t_1)=\breve\gamma_2(s_1)$ and $\breve\gamma_1(t)\ne\breve\gamma_2(s)$ for all $t<t_1$, $s\in(0,1]$ and $\breve\gamma_1(t_1)\ne\breve\gamma_2(s)$ for all $s<s_1$. 
Note that $\breve\gamma_1(t), t\in[0,t_1]$, and $\breve\gamma_2(t), t\in[0,s_1]$, are two Jordan paths from $\infty$ to $\breve\gamma_1(t_1)=\breve\gamma_2(s_1)$ that don't intersect each other at inner points. Therefore if the analytic continuations of $f_\infty$ along $\breve\gamma_1(t), t\in[0,t_1]$, and $\breve\gamma_2(t), t\in[0,s_1]$, give two different germs, these paths are desired paths $\myt\gamma_1, \myt\gamma_2$. 
Let us suppose that the analytic continuations of $f_\infty$ along $\breve\gamma_1(t), t\in[0,t_1]$, and $\breve\gamma_2(t), t\in[0,s_1]$, give the same germ. Then the analytic continuation of $f_\infty$ along $\breve\gamma_2(t), t\in[0,1]$, coincides with the analytic continuation along the path $\breve\gamma_3$ that is the union of $\breve\gamma_1(t), t\in[0,t_1]$, and $\breve\gamma_2(t), t\in[s_1,1]$. It is clear that we can choose a path $\breve\gamma_4$ from $\infty$ to $z_0$ close to $\breve\gamma_3$ such that the analytic continuations along them coincide, $\breve\gamma_4$ is the union of some path $\breve\gamma$ from $\infty$ to $\breve\gamma_2(s_0)$ for some $s_0>s_1$ and $\breve\gamma_2(t), t\in[s_0,1]$, and the paths $\breve\gamma$ and $\breve\gamma_1$ don't intersect each other. So, the analytic continuation of $f_\infty$ along $\breve\gamma_4$ also as along $\breve\gamma_2$ gives the germ $f_{z_0}^2$, but the number of intersection points of $\breve\gamma_4$ and $\breve\gamma_1$ is strictly fewer than of $\breve\gamma_2$ and $\breve\gamma_1$. Repeating this procedure several times we get the desired paths $\myt\gamma_1, \myt\gamma_2$.
\end{proof}

The following topological Lemma~\ref{L2} is~\cite[Lemma 3]{St12}. But Stahl's proof of it is incorrect. (In particular, Stahl uses the wrong statement that any compact set has countable set of connected components.) We give a correct simple proof of this lemma. The idea of the proof was kindly communicated to us by E. M. Chirka.

\begin{Lemma}
\label{L2}
Let $\RRR$ be a ring domain in $\myh\CC$, i.e. $\myh\CC\setminus \RRR$ is a union of two (non-intersecting) closed components $A_1, A_2$. Let $K$ be a compact set in $\myh\CC$ such that $K\cap\gamma\ne\varnothing$ for each closed Jordan path $\gamma\in \RRR$ that separates $A_1$ from $A_2$. Then there exists a connected component $V$ of $K$ such that $V\cap A_1\ne\varnothing$ and $V\cap A_2\ne\varnothing$.
\end{Lemma}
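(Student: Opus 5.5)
We argue by contradiction, using the classical separation property of compact sets: if $X$ is a compact Hausdorff space and $P,Q$ are closed subsets such that no connected component of $X$ meets both, then $X=X_1\sqcup X_2$ with $X_1,X_2$ clopen in $X$, $P\subset X_1$, $Q\subset X_2$. This holds because in a compact space components coincide with quasi-components.

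We apply this to $X:=K':=K\cup A_1\cup A_2$, which is compact. Suppose no component of $K$ meets both $A_1$ and $A_2$. Since $A_1,A_2$ are connected, a component of $K'$ meeting both would contain $A_1\cup A_2$. Removing $A_1\cup A_2$ from it, we would need a component of $K$ meeting both, which is a small point to check via boundary bumping. So we take $P=A_1$, $Q=A_2$ and get $K'=X_1\sqcup X_2$ with $X_i$ compact, $A_1\subset X_1$, $A_2\subset X_2$, and $\dist(X_1,X_2)=:\delta>0$ (spherical metric).

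\textbf{Step 1: separate $X_1$ from $X_2$ by a Jordan curve in $\RRR\setminus K$.} The open set $W:=\myh\CC\setminus X_2$ contains the compact set $X_1$. Let $\Omega$ be the component of $\myh\CC\setminus (X_1)^{\delta/2}$ containing $A_2$, and let $U$ be the component of $\myh\CC\setminus\myo\Omega$ containing $A_1$. Then $U$ is a connected open set containing $A_1$ and disjoint from $X_2$. Covering $X_1$ by a finite grid of closed squares of side less than $\delta/4$ that meet $X_1$, we obtain a compact polygonal set $P\supset X_1$ with $P\cap X_2=\varnothing$. Let $B$ be the component of $\myh\CC\setminus P$ containing the connected set $A_2$. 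Then $\partial B$ is a finite union of disjoint polygonal Jordan curves, and exactly one of them, $\gamma$, separates $B$ from the connected set $A_1$. This uses that $A_1$ is connected and $\myh\CC\setminus B$ is a union of components, each bounded by a single boundary curve of $B$. The curve $\gamma$ lies in $\partial P$, hence misses $X_2$. It also misses $X_1$: by shrinking, we use squares whose interiors cover $X_1$, so $\partial P\cap X_1=\varnothing$. Thus $\gamma\cap K'=\varnothing$, so $\gamma\subset \RRR$ and $\gamma\cap K=\varnothing$.

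\textbf{Step 2: the contradiction.} The curve $\gamma$ is a closed Jordan curve in $\RRR$ separating $A_1$ from $A_2$ and disjoint from $K$, contradicting the hypothesis.

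The main obstacle is the topological bookkeeping in Step 1: producing a single Jordan curve, rather than a union of curves, that separates the connected sets $A_1,A_2$. I would handle this via the polygonal-grid construction and the fact that the boundary component of $B$ facing $A_1$ is one polygonal Jordan curve. A second point needing care is the reduction from components of $K'$ to components of $K$. Alternatively, one can avoid that reduction by applying the separation theorem directly to the compact set $K\cap\myo{\RRR'}$ for a slightly smaller closed annulus $\RRR'$, taking $P,Q$ its intersections with the two boundary circles, after conformally mapping $\RRR$ to a round annulus.
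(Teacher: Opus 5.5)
Your proof is correct and is essentially the paper's argument. Both rest on the fact that components of a compactum coincide with quasi-components: the paper applies Aleksandrov's corollary to the component of $A_1$ in $K\cup A_1$, and you use the clopen splitting of $K\cup A_1\cup A_2$. Both then extract, from a finite disc or square cover, a Jordan curve that misses $K\cup A_1\cup A_2$, hence lies in $\RRR$ and separates $A_1$ from $A_2$.

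Three details are easily repaired:
\begin{itemize}
\item The reduction you defer to boundary bumping is immediate: split $K$ itself as $K=Y_1\sqcup Y_2$ with $K\cap A_1\subset Y_1$ and $K\cap A_2\subset Y_2$, and put $X_i:=Y_i\cup A_i$. These two sets are disjoint and closed.
\item Grid squares that touch only at a corner can make $\partial B$ pinch into a figure-eight, so it need not be a disjoint union of Jordan curves. Adding a small disc at each such vertex fixes this.
\item Before using a grid in $\CC$, swap $A_1$ and $A_2$ if necessary so that $\infty\notin X_1$.
\end{itemize}
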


\begin{proof}
Note that $K\cap A_1\ne\varnothing$ and $K\cap A_2\ne\varnothing$. Indeed, if $K\cap A_1 = \varnothing$, then there exists some $\varepsilon$-neighbourhood $A_1^{\varepsilon}$ of $A_1$ such that $K\cap A_1^{\varepsilon}=\varnothing$ and $A_2\cap A_1^{\varepsilon}=\varnothing$. Let $B$ be a finite subcover of $\varepsilon/2$-cover of $A_1$. Then the boundary of the connected component of $\myh\CC\setminus B$ that contains $A_2$ is a Jordan path that belongs to $\RRR$ and doesn't intersect $K$. Contradiction.

Let $F$ be the connected component of $A_1$ in the compact set $K\cup A_1$. If $F\cap A_2\ne\varnothing$, then the connected component of any point $z\in F\cap A_2$ in $K$ is the desired connected component $V$. Let us suppose that $F\cap A_2=\varnothing$. Then there exists a neighbourhood $U$ of $F$ that doesn't intersect $A_2$. Then (see~\cite[Corollary after Theorem~21]{Al77}) there exists a neighbourhood $\myt U$ of $F$ such that $\myt U\subset U$ and $\partial\myt U$ doesn't intersect $K\cup A_1$, where $\partial\myt U$ is the boundary of $\myt U$. Consequently, some $\delta$-neighbourhood of $\partial\myt U$ doesn't intersect $K\cup A_1$. Let $C$ be a finite subcover of $\delta/2$-cover of $\partial\myt U$. Then the boundary of the connected component of $\myh\CC\setminus C$ that contains $A_2$ is a Jordan path that belongs to $\RRR$ and doesn't intersect $K$. Contradiction.
\end{proof}

In~\cite[Lemma 5]{St12} Stahl states that when we consider a minimizing sequence of compact sets $K_n$ in Question~\ref{Q1}, we can suppose that all $K_n$ belong to some big disc $\myo\DD(0,r)$. For a proof of this he states that if $K\in\KKf$, then the radial projection of $K$ on some big disc $\myo\DD(0,r)$ also belongs to $\KKf$. But he doesn't prove this statement. We don't know whether this statement is true or not. Therefore we prove the following Proposition~\ref{Prop_D}, from which, evidently, it follows that we can choose $K_n$ from some big disc. Recall that $\mcap \myo\DD(0, R)=R$. Thus, if $f_\infty\in\MM(\myh\CC\setminus\myo\DD(0,R))$, then the infimum $c_0$~\eqref{c_0} in Question~\ref{Q1} is not greater than $R$.

\begin{Proposition}
\label{Prop_D}
Let $f_\infty$ be a germ at $\infty$ and $f_\infty\in\MM(\myh\CC\setminus \myo\DD(0,R))$ for some $R>0$. Let $K\in\KKf$ and $\mcap K\leq R$. Then there exists a compact set $F\subset K$ such that $F\in\KKf$ and $F\subset\DD(0,6R)$.
\end{Proposition}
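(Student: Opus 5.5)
The plan is to cut $K$ along a Jordan curve $\gamma$ lying in the annulus $\RRR:=\{R<|z|<6R\}$ that avoids $K$ and, crucially, lies in the part of $\myh\CC\setminus K$ where the two single-valued continuations of $f_\infty$ coincide. Then $F:=K\cap\operatorname{int}\gamma$ will be the desired compact set. Denote by $f_K$ the meromorphic continuation of $f_\infty$ to $\myh\CC\setminus K$ and by $g$ its continuation to $\myh\CC\setminus\myo\DD(0,R)$. Let $\Omega$ be the connected component of the open set $(\myh\CC\setminus K)\cap\{|z|>R\}$ that contains $\infty$. Both $f_K$ and $g$ are single-valued meromorphic functions on $\Omega$ with the same germ at $\infty$, so by the uniqueness theorem $f_K\equiv g$ on $\Omega$.

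To find $\gamma$, put $A_1:=\myo\DD(0,R)$ and $A_2:=\{|z|\geq 6R\}\cup\{\infty\}$, and apply Lemma~\ref{L2} to the compact set $L:=\myo\RRR\setminus\Omega$. Suppose first that every closed Jordan path in $\RRR$ separating $A_1$ from $A_2$ meets $L$. Then Lemma~\ref{L2} gives a connected component $V$ of $L$ that meets both $A_1$ and $A_2$. Hence $\diam V\geq 5R$, and the classical estimate $\mcap V\geq\diam V/4$ for continua gives $\mcap V\geq 5R/4>R$.

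I want to transfer this lower bound to $K$. The outer boundary of $V$ (the boundary of the unbounded component of its complement) has the same capacity as $V$. That outer boundary lies in $\partial\Omega\cup\TT(0,R)\cup\TT(0,6R)$, hence in $K\cup\TT(0,R)\cup\TT(0,6R)$. One then has to pass from this to $\mcap K>R$. A cleaner route is to apply the same diameter argument to the polynomially convex hull: the outer boundary of the continuum $\myh{V\cup A_1}$ lies in $K\cup\TT(0,R)$, and its diameter is at least $5R$. From there I would compare Green functions, or use subadditivity-type estimates, to get $\mcap K>R$. I expect this step to be the main obstacle: the naive bound only gives $\mcap(K\cup\myo\DD(0,R))>R$, and I must make sure the disc $\myo\DD(0,R)$ does not carry the capacity. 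To deal with this, I would use the assumption $\mcap K\leq R$ together with the room in the constant $6$. Concretely, the part of the continuum in $\{2R\leq|z|\leq 6R\}$ has diameter at least $4R$ and lies essentially in $K$ there, so the bound should still come out strictly larger than $R$ after careful bookkeeping, possibly after adjusting the inner radius of the annulus. In any case this contradicts $\mcap K\leq R$.

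So there is a closed Jordan curve $\gamma\subset\RRR$ separating $A_1$ from $A_2$ with $\gamma\cap L=\varnothing$, i.e. $\gamma\subset\Omega$. Set $F:=K\cap\operatorname{int}\gamma$. This is a compact subset of $K$ contained in $\DD(0,6R)$, because $K\cap\gamma=\varnothing$. Define $h:=f_K$ on $\operatorname{int}\gamma\setminus F$ and $h:=g$ on the exterior of $\gamma$ (including $\gamma$). These definitions agree on a neighbourhood of $\gamma$ inside $\Omega$, since $f_K=g$ there, so $h\in\MM(\myh\CC\setminus F)$ extends $f_\infty$.

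Finally, $\myh\CC\setminus F$ is connected. Indeed, every point of $\operatorname{int}\gamma\setminus K$ is joined to $\infty$ inside $\myh\CC\setminus K$. Such a path, once it leaves $\operatorname{int}\gamma$, stays in $\myh\CC\setminus F$; and the exterior of $\gamma$ together with $\gamma$ is connected and disjoint from $F$. Hence $F\in\KKf$.
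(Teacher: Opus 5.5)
Your identification $f_K\equiv g$ on $\Omega$, the gluing across $\gamma$, and the connectedness of $\myh\CC\setminus F$ are all fine. The gap is in the only step that uses $\mcap K\le R$: producing a separating Jordan curve $\gamma\subset\RRR\cap\Omega$.

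\textbf{Why the step fails as written.} You apply Lemma~\ref{L2} to $L=\myo\RRR\setminus\Omega$. But $L$ contains the whole circle $\TT(0,R)$ and every bounded complementary piece of $K\cup\myo\DD(0,R)$ inside $\myo\RRR$. So the continuum $V$ you obtain is not contained in $K$, and $\mcap V>R$ tells you nothing about $\mcap K$: $\TT(0,R)$ alone already has capacity $R$, and capacity is not subadditive. What follows in your proposal is heuristic.
\begin{itemize}
\item The claim that the outer boundary of $\myh{V\cup A_1}$ lies in $K\cup\TT(0,R)$ is unjustified. Every point of $V\cap\TT(0,6R)$ lies on that outer boundary. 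Because $L$ is cut off at $\TT(0,6R)$, such a point may belong to a bounded component of $(\myh\CC\setminus K)\cap\{|z|>R\}$ rather than to $K$.
\item A continuum in $K$ joining $\TT(0,2R)$ to $\TT(0,6R)$ only has diameter $\ge 4R$, hence capacity $\ge R$. That does not contradict $\mcap K\le R$.
\end{itemize}
Completing this route would need further continuum theory, which you do not supply: connectedness of the boundary of the component of $\myh\CC\setminus\Omega$ containing $\myo\DD(0,R)$, plus boundary bumping in a thinner annulus such as $\{3R/2\le|z|\le 6R\}$. So the proposal does not establish that $\gamma$ exists.

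\textbf{A simple fix.} Apply Lemma~\ref{L2} to $K$ itself, with the same $A_1,A_2$. A component of $K$ meeting both $\myo\DD(0,R)$ and $\{|z|\ge 6R\}$ has diameter $\ge 5R$, so $\mcap K\ge 5R/4>R$, a contradiction. Hence there is a Jordan curve $\gamma\subset\RRR\setminus K$ separating $A_1$ from $A_2$.

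That $\gamma\subset\Omega$ follows from the connectedness of $\myh\CC\setminus K$. Take a path in $\myh\CC\setminus K$ from a point of $\gamma$ to $\infty$.
\begin{itemize}
\item If the path never enters $\myo\DD(0,R)$, it already lies in $\{|z|>R\}\setminus K$.
\item Otherwise, its part after the last visit to $\myo\DD(0,R)$ starts inside $\gamma$ and ends at $\infty$, so it crosses $\gamma$. From the crossing point on, it stays in $\{|z|>R\}\setminus K$.
\end{itemize}
In either case $\gamma$ meets $\Omega$, and being connected it lies in $\Omega$. The rest of your argument then goes through unchanged.

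\textbf{Comparison with the paper.} The corrected version is close to the paper's proof. The paper applies the diameter bound to the components of $K$ meeting $\myo\DD(0,R)$, showing they lie in $\myo\DD(0,5R)$. It then uses Alexandroff's separation result to get a Jordan curve in $\DD(0,6R)$ around them that misses $K\cup\myo\DD(0,R)$. Finally it checks single-valuedness by a path-continuation ($t_\pm$) argument. Your direct gluing of $f_K$ and $g$ across $\gamma$ is a cleaner substitute for that last step.
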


\begin{proof}
Let $V$ be the union of all connected components of $K$ that intersect $\myo\DD(0, R)$. In other words, $V$ is the intersection of $K$ and the connected component of $\myo\DD(0, R)$ in the compact set $K\cup \myo\DD(0, R)$. Therefore $V$ is a compact set. Let us show that $V\subset\myo\DD(0, 5R)$. Indeed, if $z\in V$ and $|z|>5R$, then the connected component $K_z$ of $z$ in $K$ intersects $\myo\DD(0, R)$ and, consequently, has diameter not less than $|z|-R>4R$. Therefore $\mcap K \geq\mcap K_z>R$, because the capacity of any connected compact set is not less than a quarter of its diameter. Contradiction.

So, $V\subset\myo\DD(0, 5R)$. Therefore $V\cup\myo\DD(0, R)\subset\DD(0,6R)$ and $V\cup\myo\DD(0, R)$ is a connected component in $K\cup\myo\DD(0, R)$ by definition. Consequently, see~\cite[Corollary after Theorem~21]{Al77}, there exists a neighbourhood $U$ of $V\cup\myo\DD(0, R)$ such that $U\subset\DD(0,6R)$ and the boundary $\partial U$ of $U$ doesn't intersect $K\cup\myo\DD(0, R)$. 
Then the boundary $\partial\myh U$ of $\myh U$ is a subset of $\partial U$ and, consequently, $\partial \myh U$ doesn't intersect $K\cup\myo\DD(0, R)$. Therefore $F:= K\cap\myh U$ and $K\setminus F = K\setminus \myh U$ are compact sets. Let us show that $F$ is the desired compact set.

$F\subset\DD(0,6R)$ by definition. Since $\myh\CC\setminus K$ is connected and $F$, $K\setminus F$ are compact sets, we have that $\myh\CC\setminus F$ and $\myh\CC\setminus (K\setminus F)$ are also connected.
Let us show that $F\in\KKf$, i. e. $f_\infty\in\MM(\myh\CC\setminus F)$. 
Since $\partial\myh U\cap K=\varnothing$, there exists an $\varepsilon$-neighbourhood $(\partial\myh U)^\varepsilon$ of $\partial\myh U$ such that $(\partial\myh U)^\varepsilon\cap K =\varnothing$. Applying Lemma~\ref{L0} we obtain a neighbourhood $W$ such that $\myo{\myh U}\subset W\subset U^\varepsilon$, the boundary $\partial W$ is a closed Jordan curve and  $\partial W$ separates $F$ and $K\setminus F$.
We have $f_\infty\in\MM(\myh\CC\setminus\myo\DD(0, R))$ by the condition and we denote this meromorphic function also by $f_\infty(z)$.
Note that $\myo{W_\infty}=\myh\CC\setminus W \Subset \myo{\myh U_\infty}=\myh\CC\setminus \myh U\Subset \myh\CC\setminus\DD(0, R)$ by construction. 
Since $\myh\CC\setminus K$ is connected, there exist a point $z_*\in\partial W$ and a path $\gamma_*$ from $\infty$ to $z_*$ such that $\gamma_*\subset\myo{W_\infty}$ and $\gamma_*\cap K=\varnothing$. Since $\partial W\cap K=\varnothing$, we now can continue moving from $z_*$ along $\partial W$ and reach any point $z\in\partial W$. As a result, for any $z\in\partial W$ we obtain a path $\gamma_z$ from $\infty$ to $z$ such that $\gamma_z\subset\myo{W_\infty}$ and $\gamma_z\cap K=\varnothing$.
Consequently, the analytic continuation of $f_\infty$ along $\gamma_z$ gives $f_\infty(z)$. We will use such $\gamma_z$ further.

Let $\gamma:[0, 1] \to \myh\CC\setminus F$ be a closed path from $\infty$ to $\infty$. We need to show that there exists the analytic continuation of $f_\infty$ along $\gamma$ and this continuation gives $f_\infty$ again. 
Let $t_0$ be supremum of $t\in [0,1]$ such that there exists the analytic continuation of $f_\infty$ along $\gamma(s), s\in[0, t]$.
For each $t\in[0, t_0)$ we denote the germ that is the analytic continuation of $f_\infty$ along $\gamma(s), s\in[0, t]$, to the point $\gamma(t)$ by $f_{\gamma(t)}$.
Let us put
\begin{equation*}
t_+:=\sup\{t\in[0,t_0): \gamma(t)\in \myo{W_\infty} \mbox{ and } f_{\gamma(t)}(z)=f_\infty(z)\}.
\end{equation*}
By definition $t_+>0$. Let us assume that $t_+=t_0$. Since $\gamma(t_0)\in \myo{W_\infty}$ and $f_\infty\in\MM(\myo{W_\infty})$, we have that there exists the analytic continuation along $\gamma(t)$, $t\in[0,t_0]$, and gives $f_\infty(z)$. Consequently, if $t_0=1$ we have shown that there exists the analytic continuation of $f_\infty$ along $\gamma$ and it gives $f_\infty$ and if $t_0<1$ we have obtained a contradiction to the definition of $t_0$.
Let now assume that $t_+<t_0$. Let us assume that there is no $t\in [0,t_0)$ such that $\gamma(t)\in \myo{W_\infty}$ and $f_{\gamma(t)}(z)\ne f_\infty(z)$.
Then the analytic continuations of $f_\infty$ to $\gamma(t_0)$ along $\gamma(t), t\in[0, t_0],$ and along $\gamma_{t_+}\cup \gamma(t), t\in[t_+,t_0]$ coincide.
But the path $\gamma_{t_+}\cup \gamma(t), t\in[t_+,t_0]$ belongs to $\myh\CC\setminus K$, because $K$ and $F$ coincide outside $\myo{W_\infty}$. Consequently, there exists the analytic continuation along $\gamma(t), t\in[0, t_0],$ contradiction. So, we can correctly define
\begin{equation*}
t_-:=\inf\{t\in[0,t_0): \gamma(t)\in \myo{W_\infty} \mbox{ and } f_{\gamma(t)}(z)\ne f_\infty(z)\}.
\end{equation*}
Reasoning in the same way as in the proof of correctness of the definition of $t_-$ we obtain $t_+<t_-$. Let us consider the path $\myt\gamma:=\gamma_{t_+}\cup\gamma(t), t\in[t_+,t_-]\cup-\gamma_{t_-}$, where $-\gamma_{t_-}$ is the path $\gamma_{t_-}$ taken in the other direction. The path $\myt\gamma\subset\myh\CC\setminus K$, but the analytic continuation along it gives a germ that is different from $f_\infty$. Contradiction.
\end{proof}

The following corollary immediately follows from Proposition~\ref{Prop_D}.

\begin{Corollary}
\label{Cor_D}
Let $f_\infty$ be a germ at $\infty$ and $f_\infty\in\MM(\myh\CC\setminus \myo\DD(0,R))$ for some $R>0$. Then we can choose a minimizing sequence of compact sets $K_n$~\eqref{c_0} in Question~\ref{Q1} (or in Theorem~\ref{t1}) such that all $K_n$ belong to $\DD(0,6R)$.
\end{Corollary}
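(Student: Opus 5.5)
The corollary follows directly from Proposition~\ref{Prop_D}: replace each member of an arbitrary minimizing sequence by a subset lying in $\DD(0,6R)$ and check that this costs nothing in capacity. First observe that $\myo\DD(0,R)\in\KKf$. Its complement is connected and $f_\infty\in\MM(\myh\CC\setminus\myo\DD(0,R))$ by hypothesis. Hence
\[
c_0=\inf_{K\in\KKf}\mcap K\le\mcap\myo\DD(0,R)=R .
\]

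Next take any minimizing sequence $K_n\in\KKf$ with $\mcap K_n\to c_0$. There are two cases.
\begin{itemize}
\item If $c_0<R$, discard finitely many terms so that $\mcap K_n\le R$ for all $n$.
\item If $c_0=R$, the constant sequence $K_n:=\myo\DD(0,R)\subset\DD(0,6R)$ is already minimizing, so nothing remains to prove. Alternatively, replace any $K_n$ with $\mcap K_n>R$ by $\myo\DD(0,R)$; this keeps the sequence minimizing.
\end{itemize}
In either case we may assume that every $K_n$ satisfies $\mcap K_n\le R$.

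Now apply Proposition~\ref{Prop_D} to each $K_n$. It gives $F_n\subset K_n$ with $F_n\in\KKf$ and $F_n\subset\DD(0,6R)$. Monotonicity of capacity yields
\[
c_0\le\mcap F_n\le\mcap K_n\to c_0 ,
\]
so $\{F_n\}$ is again a minimizing sequence, and all its members lie in $\DD(0,6R)$.

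In the setting of Theorem~\ref{t1} the hypothesis $f_\infty\in\MM(\myh\CC\setminus\myo\DD(0,R))$ holds for any $R$ with $E\subset\DD(0,R)$. Indeed, $f_\infty$ continues along every path in the simply connected domain $\myh\CC\setminus\myo\DD(0,R)\subset\myh\CC\setminus E$, so the monodromy theorem makes it single-valued there. Hence the corollary applies to Theorem~\ref{t1} as well.

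There is no real obstacle in this argument. The only point needing care is the normalization $\mcap K_n\le R$ required by Proposition~\ref{Prop_D}, and this is handled by the bound $c_0\le R$ together with the case distinction above.
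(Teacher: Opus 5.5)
Your proposal is correct and follows the paper's argument. The paper notes $c_0\le\mcap\myo\DD(0,R)=R$ and declares the corollary an immediate consequence of Proposition~\ref{Prop_D}, applied termwise together with monotonicity of capacity. Your case distinction simply makes explicit the normalization $\mcap K_n\le R$ that the paper leaves implicit.
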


The following Statement~\ref{St_c_0} is exactly~\cite[Lemma 4]{St12}. Unfortunately, Stahl's proof of it is incorrect, because he uses~\cite[Lemma 2]{St12}, but gives obviously wrong proof of this lemma. We don't know whether~\cite[Lemma 2]{St12} is true or not.

\begin{Statement}
\label{St_c_0}
Let $f_\infty$ be an analytic germ at $\infty$ and
\begin{equation*}
c_0:= \inf\limits_{K\in\KKf}\mcap K=0.
\end{equation*}
Then $f_\infty\in\MM(\myh\CC\setminus E)$, where $E$ is a polar compact set.
\end{Statement}

\begin{proof}
Let us suppose that $f_\infty$ has multivalued continuation at some point $z_0$. It means that there exist two paths $\gamma_1,\gamma_2$ from $\infty$ to $z_0$ such that there exist analytic continuations of $f_\infty$ along these paths and we get two different germs $f^1_{z_0}, f^2_{z_0}$, respectively, at $z_0$. Using Lemma~\ref{L1}, we suppose that $\gamma_1,\gamma_2$ are Jordan paths that don't intersect each other in inner points. 
Since $f_\infty$ is analytic at $\infty$, there exists such $R>0$ that $f_\infty\in\MM(\myh\CC\setminus\myo\DD(0,R))$. Therefore we can slightly change the paths $\gamma_1,\gamma_2$ in such a way that they start from some point $z_*$ in a small neighbourhood of infinity and analytic continuations of $f_*$ along $\gamma_1, \gamma_2$ give two different germs at $z_0$, where $f_*$ is the analytic continuation of $f_\infty$ in $z_*$ along any path in $\myh\CC\setminus\myo\DD(0,R)$.
We suppose that $z_*\in\myh\CC\setminus\myo\DD(0,10R)$. 
Due to the theorem on analytic continuation along close paths we suppose that $\gamma_1,\gamma_2$ are piecewise linear paths.
In particular, the path $\gamma =\gamma_1\cup-\gamma_2$ is a closed piecewise linear Jordan path, where $-\gamma_2$ is the path $\gamma_2$ taken in the other direction.
Moreover, we also can slightly change the paths $\gamma_1,\gamma_2$ such that $z_*$ belongs to only one line segment $I_*$ of $\gamma$ and $z_0$ belongs to only  one line segment $I_0$ of $\gamma$ (i.e. $z_*, z_0$ are not vertices of $\gamma$) and the angles between $I_*$ and its adjacent segments and between $I_0$ and its adjacent segments are greater than $\pi/4$. See the picture.

Recall that we denote (open) $\varepsilon$-neighbourhood in Euclidean metric of any set $G\subset\CC$ by $G^\varepsilon$. 
Let $d_0$ be the minimum of the distances between $z_0$ and the endpoints of $I_0$ and $d_*$ between $z_*$ and the endpoints of $I_*$. Let $d=\min(d_0, d_*)$. 
Let $\varepsilon$ be such small that 1) $4\varepsilon$ is less than the minimum of the distances between any non-adjacent segments of $\gamma$, 2) $3\varepsilon<d$, 3) $f_*\in\MM(\gamma_1^{3\varepsilon})$ and $f_*\in\MM(\gamma_2^{3\varepsilon})$, 4) $\varepsilon<R$.
Let $K\in\KKf$ and $K\in\DD(0, 6R)$. We will show that $\mcap K\geq\varepsilon/2$. Then it will immediately follow from  Corollary~\ref{Cor_D} that $c_0\geq\varepsilon/2$. Contradiction with the assumption $c_0=0$.

So, let us consider the ring domain $\gamma^\varepsilon$, see the picture. The boundary of $\gamma^\varepsilon$ is a union of two closed Jordan curves $Q_1, Q_2$, which are separated by $\gamma$. Let $D_1^\varepsilon:=\gamma_1^{2\varepsilon}\cap\gamma^\varepsilon$ (red domain on the picture) and $D_2^\varepsilon:=\gamma_2^{2\varepsilon}\cap\gamma^\varepsilon$ (green domain on the picture). Let $B_0^1$ and $B_*^1$ be the two components of the boundary of $\gamma_1^{2\varepsilon}$ in $\gamma^\varepsilon$ in neighbourhoods of $z_0$ and $z_*$, respectively. Let $B_0^2$ and $B_*^2$ be the two components of the boundary of $\gamma_2^{2\varepsilon}$ in $\gamma^\varepsilon$ in neighbourhoods of $z_0$ and $z_*$, respectively.
Thus, $B_0^1, B_0^2$ are two arcs of angle $\pi/3$ of the circle $\TT(z_0,2\varepsilon)$ and $B_*^1, B_*^2$ are two arcs of angle $\pi/3$ of the circle $\TT(z_*,2\varepsilon)$. Let $G_0:=\gamma^\varepsilon\cap\DD(z_0, 2\varepsilon)$ (red-green domain around $z_0$) and $G_*:=\gamma^\varepsilon\cap\DD(z_*, 2\varepsilon)$ (red-green domain around $z_*$). Then $D_1^\varepsilon\cap D_2^\varepsilon$ = $G_0\cup G_*$.

\begin{center}
\includegraphics[width=16cm,height=9cm]{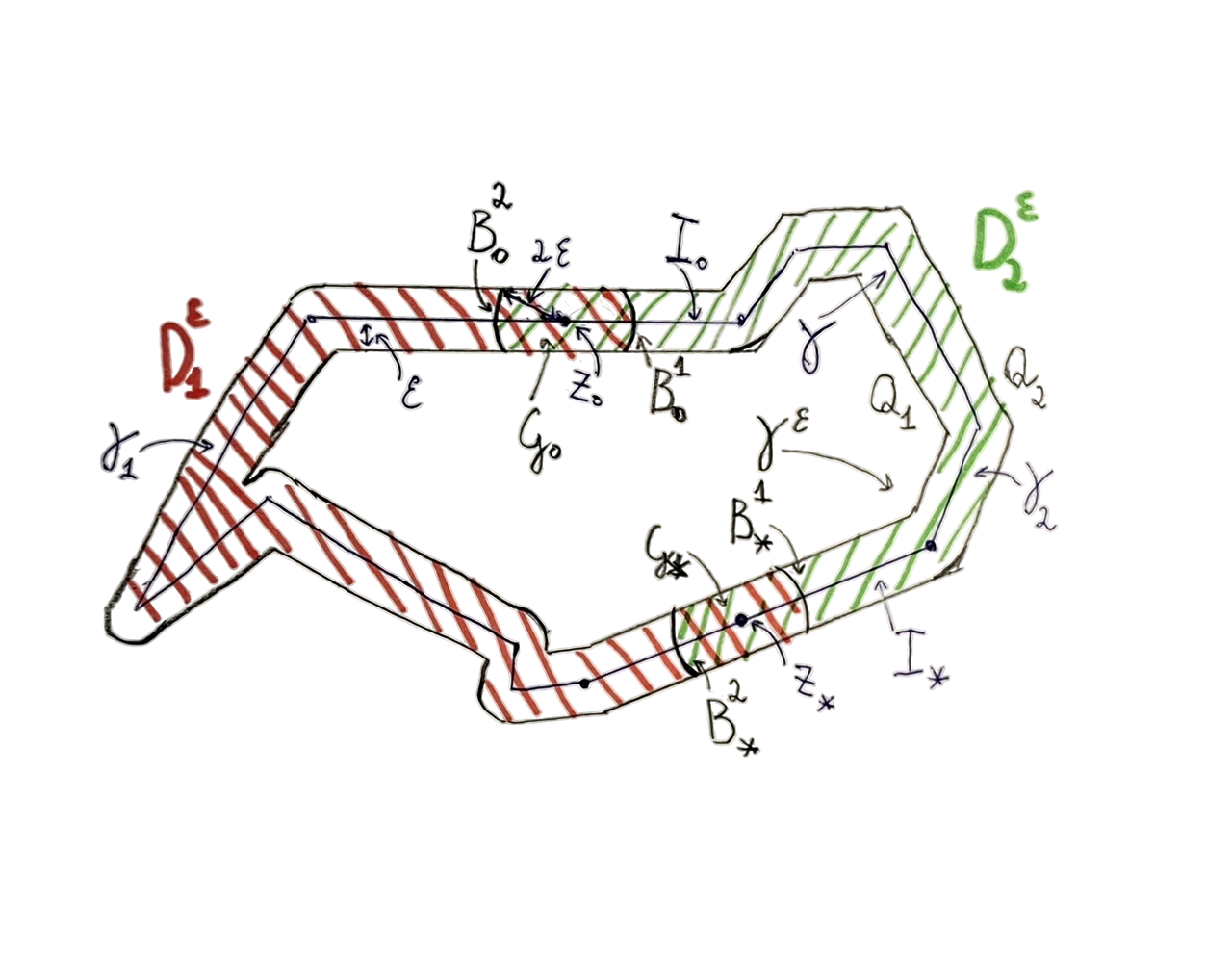}
\end{center}

Let us apply Lemma~\ref{L2} for $\RRR=\gamma^\varepsilon$ and the compact set $K$. We have two possibilities, either there exists a connected component $V$ of $K$ such that $V\cap Q_1\ne\varnothing$ and $V\cap Q_2\ne\varnothing$, or there exists such closed Jordan path $\alpha$ separating $Q_1$ from $Q_2$ that doesn't intersect $K$. Let us suppose that the first holds true. Then the diameter of $V\geq 2\varepsilon$, because the distance between $Q_1, Q_2$ is $2\varepsilon$. Since the capacity of any connected compact set is not less then a quarter of its diameter (see~\cite[Theorem 5.3.2]{Ra95}), we have $\mcap K\geq\mcap V\geq \varepsilon/2$.

Let us now suppose that the second holds true. So, there exists such $\alpha$. Then there exists an arc $\alpha_1$ of $\alpha$ such that $\alpha_1\subset \myo{D_1^\varepsilon}$ and the initial point of $\alpha_1$ belongs to $B_*^1$ and the final point of $\alpha_1$ belongs to $B_0^1$.
It holds true, because otherwise $\alpha$ doesn't separate $Q_1$ from $Q_2$. Analogously, there exists an arc $\alpha_2$ of $\alpha$ such that $\alpha_2\subset \myo{D_1^\varepsilon}$ and the initial point of $\alpha_2$ belongs to $B_*^2$ and the final point of $\alpha_2$ belongs to $B_0^2$.
Let us connect $z_*$ with the initial point of the path $\alpha_1$ by some curve in $\myo{G_*}$ and get a path $\myt\alpha_1$ from $z_*$ to some point on $B_0^1$ that belongs to $\myo{D_1^\varepsilon}$. Since $G_*\subset\myh\CC\setminus\DD(0,8R)$, we have $K\cap \myo{G_*}=\varnothing$. Therefore $\myt\alpha_1\cap K =\varnothing$ and there exists the analytic continuation of $f_*$ along $\myt\alpha_1$. 
Analogously, we construct a path $\myt\alpha_2$ from $z_*$ to some point on $B_0^2$ that belongs to $\myo{D_2^\varepsilon}$, $\myt\alpha_2\cap K =\varnothing$ and there exists the analytic continuation of $f_*$ along $\myt\alpha_2$. But the analytic continuation of $f_*$ along $\myt\alpha_1$ to any point of $G_0$ gives the germ that is the immediate analytic continuation of $f^1_{z_0}$ and along $\myt\alpha_2$ to any point of $G_0$ gives the germ that is the immediate analytic continuation of $f^2_{z_0}$. Consequently, since $f^1_{z_0}\ne f^2_{z_0}$ and $K\in\KKf$, the compact set $K$ should separate parts of $\myt\alpha_1$ that belong to $G_0$ from parts of $\myt\alpha_2$ that belong to $G_0$. By construction there exist such arc $\breve\alpha_1$ of $\myt\alpha_1$ that $\breve\alpha_1\subset \myo{G_0}$ and $\breve\alpha_1\cap B_0^1\ne\varnothing$, $\breve\alpha_1\cap B_0^2\ne\varnothing$. Analogously, there exists such arc $\breve\alpha_2$ of $\myt\alpha_2$ that $\breve\alpha_2\subset \myo{G_0}$ and $\breve\alpha_2\cap B_0^1\ne\varnothing$, $\breve\alpha_2\cap B_0^2\ne\varnothing$. Consequently, $K$ separates $\breve\alpha_1$ from $\breve\alpha_2$ in $G_0$. In particular, the projection of $K$ on $I_0$ should contain the intersection of the projections of $\breve\alpha_1$ and $\breve\alpha_2$ on $I_0$. It is clear that this intersection always contains the interval $I$ of the length $2\sqrt{3}\varepsilon$ that is the difference between the projections of $G_0$ and $B_0^1\cup B_0^2$ on $I_0$. Since the capacity of the projection of any compact set on any line is not greater than the capacity of the set itself (see~\cite[Theorem 5.3.1]{Ra95}), we have $\mcap K\geq\sqrt{3}\varepsilon/2$.  
\end{proof}

Now we pass to the second part of this section related to the logarithmic potential theory.
Recall that we denote (open) $\varepsilon$-neighbourhood in Euclidean metric of any set $G\subset\CC$ by $G^\varepsilon$, i.e. 
\begin{equation*}
G^{\varepsilon}:=\{z: \dist(z, G)<\varepsilon\}.
\end{equation*}
For any non-polar compact set $K$ we denote the level line of Green function of its complement with the singularity at $\infty$ at the level $\log(1+\varepsilon)$ by $(K)_\varepsilon$, i.e.
\begin{equation*}
(K)_\varepsilon:= \{z: g_{\myh\CC\setminus K}(z) = \log(1+\varepsilon)\}.
\end{equation*}
It is clear that
\begin{equation*}
\myh{(K)_\varepsilon} = \{z: g_{\myh\CC\setminus K}(z) \leq \log(1+\varepsilon)\},
\end{equation*}
i.e. $\myh{(K)_\varepsilon}$ is the compact set bounded by $(K)_\varepsilon$.

The following Lemma~\ref{L3} is exactly~\cite[Lemma~1]{PeRa}, see also~\cite[Section~3]{AptYa}. We give the proof of this lemma, which in fact repeats the original proof from~\cite{PeRa}, for the completeness of the presentation.

\begin{Lemma}
\label{L3}
Let $\Gamma$ be a connected compact set that isn't a point. Then 
\begin{equation*}
\dist(\Gamma, (\Gamma)_\varepsilon)\geq\frac{\varepsilon^2}{(1+\varepsilon)}\mcap \Gamma 
\end{equation*}
for any $\varepsilon>0$.
In particular, for $\varepsilon\in(0,1)$ we have  $\dist(\Gamma, (\Gamma)_\varepsilon)>\frac{\varepsilon^2}{2}\mcap \Gamma$.
\end{Lemma}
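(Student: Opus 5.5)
The plan is to pass to the exterior conformal map and reduce the claim to a distortion estimate for univalent functions. Since $\Gamma$ is connected and not a point, the domain $\Omega:=\myh\CC\setminus\myh\Gamma=\Gamma_\infty$ is simply connected, and there is a conformal map $\varphi$ of $\{|w|>1\}\cup\{\infty\}$ onto $\Omega$ with the expansion
\begin{equation*}
\varphi(w)=c\,w+b_0+\frac{b_1}{w}+\dots,\qquad c=\mcap\Gamma .
\end{equation*}
This is the Riemann mapping argument of Lemma~\ref{L0}, normalized at $\infty$. Then $g_{\myh\CC\setminus\Gamma}(\varphi(w))=\log|w|$, so the level line is $(\Gamma)_\varepsilon=\varphi(\{|w|=1+\varepsilon\})$.

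We also note that $\partial\Omega\subset\Gamma$ and $\Gamma\subset\myh\Gamma$. For $z\in\Omega$ the nearest point of $\myh\Gamma$ lies on $\partial\Omega$, hence $\dist(z,\Gamma)=\dist(z,\partial\Omega)$. It therefore suffices to show that for every $w$ with $|w|=1+\varepsilon$ we have
\begin{equation*}
\dist(\varphi(w),\partial\Omega)\geq \frac{\varepsilon^2}{1+\varepsilon}\,c .
\end{equation*}
Rescaling by $c$ and translating, we may take $\varphi$ in the class $\Sigma$, that is $\varphi(w)=w+b_0+\dots$, and prove $\dist(\varphi(w),\partial\Omega)\geq\varepsilon^2/(1+\varepsilon)$.

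The first step is the Koebe one-quarter theorem applied to $\varphi$ on the disc $\DD(w,|w|-1)=\DD(w,\varepsilon)$, which lies in $\{|\zeta|>1\}$. This gives
\begin{equation*}
\dist(\varphi(w),\partial\Omega)\geq \tfrac{\varepsilon}{4}|\varphi'(w)| .
\end{equation*}
The second step is a lower bound for $|\varphi'(w)|$ for $\varphi\in\Sigma$. From the area theorem (or the classical distortion inequality in $\Sigma$) one gets $|\varphi'(w)|\geq 1-|w|^{-2}$. Combining the two steps yields a bound of the form $\mathrm{const}\cdot\varepsilon^2$ for small $\varepsilon$, which already gives the qualitative shape of the lemma: distance of order $\varepsilon^2\mcap\Gamma$.

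The main obstacle is the exact constant $1/(1+\varepsilon)$. The crude chain above gives
\begin{equation*}
\frac{\varepsilon^2(2+\varepsilon)}{4(1+\varepsilon)^2},
\end{equation*}
which is smaller than the claimed bound. To sharpen it, I would try the following argument in the $\Omega$-plane. Let $d=\dist(z,\partial\Omega)$ with $z=\varphi(w)$, and let $a\in\partial\Omega$ be a nearest point. Apply the Koebe theorem to the map $1/(\varphi-a)$ composed with $w\mapsto 1/w$, which is univalent in the unit disc. Then use the growth estimates of the class $\Sigma$ relative to the boundary point $a$; these say that $|\varphi(\zeta)-a|$ is controlled by $(|\zeta|-1)^2/|\zeta|$ in the extremal (slit) case. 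Since the square root behaviour of $g_{\myh\CC\setminus\Gamma}$ near a segment endpoint is exactly what produces $\varepsilon^2$, I would verify the constant against the extremal example $\Gamma=[-2,2]$, where $\varphi(w)=w+1/w$, $c=1$. There $\dist(\varphi(1+\varepsilon),2)=\varepsilon^2/(1+\varepsilon)$, so this is precisely the claimed constant. This suggests that a Koebe-type estimate oriented at the boundary point $a$, rather than the interior disc estimate, gives equality for the slit and hence the lemma.

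The final assertion for $\varepsilon\in(0,1)$ then follows at once, since $1+\varepsilon<2$.
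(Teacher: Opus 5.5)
\textbf{This is a genuine gap.} As written, your argument does not reach the stated constant, so it does not prove the lemma. The rigorous part (Koebe's $1/4$ theorem on $\DD(w,\varepsilon)$ plus $|\varphi'(w)|\ge 1-|w|^{-2}$) yields only $\frac{\varepsilon^2(2+\varepsilon)}{4(1+\varepsilon)^2}\mcap\Gamma$. This is strictly less than half of $\frac{\varepsilon^2}{1+\varepsilon}\mcap\Gamma$, and also below $\frac{\varepsilon^2}{2}\mcap\Gamma$, so neither assertion of the lemma follows.

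The ``sharpening'' is only a heuristic. Checking that the slit $[-2,2]$ attains $\varepsilon^2/(1+\varepsilon)$ shows the constant cannot be improved; it does not show that every $\Gamma$ satisfies it. The tool you name would also not give it. Koebe's $1/4$ theorem applied to $u\mapsto c/(\varphi(1/u)-a)$ only says that $\{|z-a|>4c\}\subset\Omega$, i.e. $\myh\Gamma\subset\myo\DD(a,4c)$. That carries no information about $|\varphi(w)-a|$.

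\textbf{The missing step} is the \emph{upper} growth estimate of the class $S$, applied to exactly that function. For $a\in\Gamma$ and $c=\mcap\Gamma$ put $h(u):=c/(\varphi(1/u)-a)$. Since $a\notin\Omega$, $h$ is holomorphic and univalent in $\DD(0,1)$ with $h(u)=u+O(u^2)$. Hence $h\in S$ and $|h(u)|\le |u|/(1-|u|)^2$. With $w=1/u$ this reads
\[
|\varphi(w)-a|\ \ge\ c\,\frac{(|w|-1)^2}{|w|}=c\,\frac{\varepsilon^2}{1+\varepsilon}\qquad(|w|=1+\varepsilon).
\]
Taking the infimum over $a\in\Gamma$ and $|w|=1+\varepsilon$ gives the lemma. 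That would be a complete proof, different from the paper's.

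\textbf{The paper's route.} It keeps your second ingredient, $|\Phi'(\xi)|\ge\mcap\Gamma\,(1-|\xi|^{-2})$, which is Goluzin's distortion theorem. Incidentally, this bound does not follow from the area theorem by Cauchy--Schwarz; that only gives $|\Phi'(\xi)|\ge\mcap\Gamma\,\bigl(1-\frac{1}{|\xi|^2-1}\bigr)$. The paper then replaces the one-point Koebe estimate by integration along a path:
\begin{itemize}
\item take nearest points $z_1\in(\Gamma)_{\varepsilon'}$ and $z_2\in(\Gamma)_\varepsilon$;
\item the preimage $L$ of $[z_1,z_2]$ joins the circles $|\xi|=1+\varepsilon'$ and $|\xi|=1+\varepsilon$;
\item hence $|z_1-z_2|=\int_L|\Phi'(\xi)||d\xi|\ge\mcap\Gamma\int_{1+\varepsilon'}^{1+\varepsilon}(1-r^{-2})\,dr$;
\item letting $\varepsilon'\to0$ gives exactly $\varepsilon^2/(1+\varepsilon)$.
\end{itemize}

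The Koebe step is precisely where your chain loses the factor. The growth-theorem route avoids both the path argument and the sharp $\Sigma$-distortion bound.
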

\begin{proof}

Since $\dist(\Gamma, (\Gamma)_\varepsilon)\geq\dist(\myh\Gamma, (\Gamma)_\varepsilon)$ and $g_{\myh\CC\setminus\Gamma} = g_{\myh\CC\setminus\myh\Gamma}$, we assume that $\myh\CC\setminus\Gamma$ is connected.
It is well known that $g_{\myh\CC\setminus \Gamma}(z)=\log|\varphi(z)|$, where $\varphi$ is the conformal map of $\myh\CC\setminus\myh\Gamma$, i.e. the unbounded component of $\myh\CC\setminus \Gamma$, onto $\myh\CC\setminus\myo\DD(0,1)$ such that $\varphi(z)=(\mcap \Gamma)^{-1} z + O(1)$ as $z\to\infty$. The existence of $\varphi$ follows from Riemann mapping theorem. We denote the inverse function of $\varphi(z)$ by $\Phi(\xi)$. Then $\Phi$ is the conformal map of $\myh\CC\setminus\myo\DD(0,1)$ onto $\myh\CC\setminus \myh\Gamma$ such that $\Phi(\xi)=(\mcap \Gamma) \xi+O(1)$ as $\xi\to\infty$. From the distortion theorem~\cite[Chapter 4, \S2, Theorem~1]{Gol66} it follows that 
\begin{equation*}
|\Phi'(\xi)|\geq\mcap \Gamma\left(1-\frac{1}{|\xi|^2}\right) \quad \mbox{ for any }\xi\in\myh\CC\setminus\myo\DD(0,1).
\end{equation*}

Let $\varepsilon'\in(0,\varepsilon)$. Let us estimate $\dist((\Gamma)_{\varepsilon'}, (\Gamma)_\varepsilon)$ Let $z_1\in(\Gamma)_{\varepsilon'}, z_2\in(\Gamma)_{\varepsilon}$ be such that $\dist((\Gamma)_{\varepsilon'}, (\Gamma)_\varepsilon)=\dist(z_1,z_2)$. Let $L=\varphi([z_1,z_2])$. Then
\begin{align*}
\dist(z_1,z_2)&=\int\limits_{[z_1,z_2]}|dz|=\int\limits_L|\Phi'(\xi)||d\xi|\geq\mcap\Gamma\int\limits_L\left(1-\frac{1}{|\xi|^2}\right)|d\xi|\\
&\geq\mcap\Gamma\int\limits_{1+\varepsilon'}^{1+\varepsilon}\left(1-\frac{1}{r^2}\right)dr=(\varepsilon-\varepsilon')\left(1-\frac{1}{(1+\varepsilon')(1+\varepsilon)}\right)\mcap\Gamma.
\end{align*}
Since $\dist(\Gamma, (\Gamma)_\varepsilon)>\dist((\Gamma)_{\varepsilon'}, (\Gamma)_\varepsilon)$ we can tend $\varepsilon'\to 0$ and get 
\begin{equation*}
\dist(\Gamma, (\Gamma)_\varepsilon)\geq\frac{\varepsilon^2}{(1+\varepsilon)}\mcap \Gamma.
\end{equation*}
\end{proof}

The following simple corollary of Lemma~\ref{L3} in the case of a compact set with a finite number of connected components is exactly~\cite[Lemma~2]{PeRa} (see also~\cite[Section~3]{AptYa}). Actually our proof is analogous to the proof from~\cite{PeRa}.

\begin{Corollary}
\label{Cor_dist}
Let $K$ be a compact set such that the capacity of each its connected component is not less than $\delta$ for some $\delta>0$. Then 
\begin{equation*}
\dist(K, (K)_\varepsilon)\geq\frac{\varepsilon^2}{(1+\varepsilon)}\delta 
\end{equation*}
for any $\varepsilon>0$.
In particular, for $\varepsilon\in(0,1)$ we have  $\dist(K, (K)_\varepsilon)>\frac{\varepsilon^2}{2}\delta$.
\end{Corollary}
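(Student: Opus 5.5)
We deduce Corollary~\ref{Cor_dist} from Lemma~\ref{L3} by comparing Green functions. The key observation is monotonicity: if $\Gamma\subset K$ are non-polar compact sets, then $g_{\myh\CC\setminus K}\leq g_{\myh\CC\setminus \Gamma}$ everywhere in $\CC$.

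To justify monotonicity, note that the function $g_{\myh\CC\setminus K}-g_{\myh\CC\setminus\Gamma}$ is subharmonic in the unbounded component of $\myh\CC\setminus\Gamma$, because $g_{\myh\CC\setminus K}$ is subharmonic and $g_{\myh\CC\setminus\Gamma}$ is harmonic there. It is bounded near $\infty$, since both functions are $\log|z|+O(1)$. At nearly every point of $\partial\Gamma_\infty$ we have $g_{\myh\CC\setminus\Gamma}\to 0$, while $g_{\myh\CC\setminus K}$ is upper semicontinuous and vanishes n.e. on $K$; hence its $\limsup$ at such points is at most $0$. The extended maximum principle (\cite[Theorem 3.6.9]{Ra95}) then gives the inequality in that component. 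On $\myh\Gamma$ we have $g_{\myh\CC\setminus\Gamma}\ge 0$, and $g_{\myh\CC\setminus K}=0$ on the hull of $K$, so the inequality holds there as well. In fact one only needs it off $\myh K$.

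Now take $z\in (K)_\varepsilon$ and $w\in K$; we must bound $|z-w|$ from below. Let $\Gamma$ be the connected component of $K$ containing $w$. By hypothesis $\mcap\Gamma\geq\delta>0$, so $\Gamma$ is not a point. By monotonicity, $g_{\myh\CC\setminus\Gamma}(z)\geq g_{\myh\CC\setminus K}(z)=\log(1+\varepsilon)$, so $z$ lies outside, or on the boundary of, the compact set $\{g_{\myh\CC\setminus\Gamma}\le\log(1+\varepsilon)\}=\myh{(\Gamma)_\varepsilon}$.

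Since $w\in\Gamma\subset\myh{(\Gamma)_\varepsilon}$, the segment $[w,z]$ must meet $(\Gamma)_\varepsilon$. This follows by continuity of $g_{\myh\CC\setminus\Gamma}$ off $\myh\Gamma$, or simply because the segment leaves the compact set through its boundary, which lies in the level set. Therefore $|z-w|\geq\dist(\Gamma,(\Gamma)_\varepsilon)\geq\frac{\varepsilon^2}{1+\varepsilon}\mcap\Gamma\geq\frac{\varepsilon^2}{1+\varepsilon}\delta$ by Lemma~\ref{L3}. Taking the infimum over $z$ and $w$ gives the claim. For $\varepsilon\in(0,1)$ we have $1+\varepsilon<2$, which yields the strict inequality.

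The only delicate point is the boundary step, that the segment crosses $(\Gamma)_\varepsilon$. It is handled by noting that $g_{\myh\CC\setminus\Gamma}$ is continuous in the unbounded component of $\myh\CC\setminus\Gamma$ and at most $0$ on $\myh\Gamma$. By the intermediate value theorem, the segment therefore contains a point where $g_{\myh\CC\setminus\Gamma}=\log(1+\varepsilon)$.
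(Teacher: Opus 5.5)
Your proof is correct and follows the same route as the paper: you pass to the connected component $\Gamma$ of $K$ containing the point of $K$ in question, use $g_{\myh\CC\setminus\Gamma}\geq g_{\myh\CC\setminus K}$ to get $\dist(\Gamma,z)\geq\dist(\Gamma,(\Gamma)_\varepsilon)$, and then apply Lemma~\ref{L3}. You also justify the two steps the paper asserts without proof, namely the monotonicity of Green functions (via the extended maximum principle) and the fact that the segment $[w,z]$ crosses the level line; both justifications are valid here because every component of $K$ is a non-degenerate continuum, so the Green functions involved are continuous and vanish on the respective hulls, a fact your claim that $g_{\myh\CC\setminus K}$ vanishes on the hull of $K$ relies on without stating.
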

\begin{proof}
Let $z_1\in K, z_2\in (K)_\varepsilon$ be such that $\dist(K, (K)_\varepsilon)=\dist(z_1, z_2)$. Let $K_1$ be the connected component of $K$ that contains $z_1$. Since $K_1\subset K$ we have $g_{\myh\CC\setminus K_1}(z)\geq g_{\myh\CC\setminus K}(z)$ for all $z\in\CC$. In particular, $g_{\myh\CC\setminus K_1}(z_2)\geq g_{\myh\CC\setminus K}(z_2)=\log(1+\varepsilon)$. Therefore $\dist(K_1, z_2)\geq\dist(K_1, (K_1)_\varepsilon)$. Applying Lemma~\ref{L3} we obtain
\begin{equation*}
\dist(z_1, z_2)\geq\dist(K_1, z_2)\geq\dist(K_1, (K_1)_\varepsilon)
\geq\frac{\varepsilon^2}{(1+\varepsilon)}\mcap K_1 
\geq\frac{\varepsilon^2}{(1+\varepsilon)}\delta.
\end{equation*}
\end{proof}

Recall that the Hausdorff distance $d_H$ between two non-empty compact sets $X, Y\subset\CC$ is
\begin{equation*}
d_H(X,Y)=\inf\{\varepsilon: X\subset Y^\varepsilon \mbox{ and }Y\subset X^\varepsilon\}.
\end{equation*}
We denote the convergence in Hausdorff topology by $\xrightarrow{H}$.
It is well known that the set of all non-empty compact subsets of any compact set $K\subset\CC$ is compact in Hausdorff topology. Therefore from any sequence of non-empty compact sets contained in some fixed disc $\myo\DD(0, R)$ we can choose a subsequence that converges in Hausdorff topology.

The following Statement~\ref{St_H_cap} shows that under some regularity condition convergence in Hausdorff topology implies convergence of capacities. 
Analogous statement in the case when the number of connected components of all compact sets is finite and less than some fixed number $N$ was proved in~\cite[Theorem~2]{PeRa} (see also~\cite[Section~3]{AptYa}).

\begin{Statement}
\label{St_H_cap}
Let $K_n$, $n\in\NN$, be compact sets and capacity of any connected component of each $K_n$ be not less than some $\delta>0$. Let $K_n\xrightarrow{H}K$ for some compact set $K\subset\CC$. Then there exists $\lim_{n\to\infty}\mcap{K_n}=\mcap K$.
\end{Statement}

\begin{Remark}
In general case, for arbitrary compact sets $K_n$ such that $K_n\xrightarrow{H}K$ we have only $\limsup_{n\to\infty}\mcap{K_n}\leq\mcap K$. This inequality easily follows from the definition of the capacity via the transfinite diameter, for details see, e.g.,~\cite[Section~2]{KaKo22}. It is easy to construct an example when $\mcap K_n=0$, $K_n\xrightarrow{H}K$, and $\mcap K=1$. For example, for $K_n$ we can take all rational points of the form $k/2^n$, $k\in\ZZ$, belonging to the interval $[-2,2]$. Then $K=[-2,2]$. More complicated example, which goes back to Ahlfors and Beurling, when $K_n\xrightarrow{H}K$, $\mcap K =1$ and the limit of $\mcap K_n$ can be made an arbitrary number from the interval $(0,1)$, see in~\cite[Proposition 4.1]{KaKo22}.
\end{Remark}

Before the proof of Statement~\ref{St_H_cap} we prove auxiliary simple Lemma~\ref{L_H}.

We denote the diameter of any set $G$ by $\diam G$.

\begin{Lemma}
\label{L_H}
Let $K_n$, $n\in\NN$, be compact sets and diameter of any connected component of each $K_n$ be not less than some $\delta>0$. Let $K_n\xrightarrow{H}K$ for some compact set $K\subset\CC$. Then the diameter of any connected component of $K$ is also not less than $\delta$.
\end{Lemma}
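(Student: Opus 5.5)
We argue by contradiction. Suppose some connected component $C$ of $K$ has $\diam C<\delta$. Put $\eta:=(\delta-\diam C)/4>0$. The plan is to separate $C$ from the rest of $K$ by a curve that lies close to $C$, avoids $K$, and stays away from it. Then, for large $n$, we show that any point of $K_n$ lying near $C$ has its whole connected component trapped inside a set of diameter less than $\delta$, which contradicts the hypothesis on $K_n$.

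The first step is to construct this separating neighbourhood. Since $C$ is a connected component of the compact set $K$, the standard fact already used in the paper (\cite[Corollary after Theorem~21]{Al77}) gives an open neighbourhood $U$ of $C$ with $U\subset C^{\eta}$ and $\partial U\cap K=\varnothing$. We may take $U$ bounded; if necessary we replace it by a finite union of small discs, which keeps $\partial U$ compact. Because $\partial U$ is compact and disjoint from $K$, there is $\rho>0$ with $\dist(\partial U,K)\ge 2\rho$. We also record that $\diam U\le \diam C+2\eta<\delta$.

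The second step uses Hausdorff convergence. Choose $n$ so large that $d_H(K_n,K)<\rho$. Then $K_n\subset K^{\rho}$, and hence $K_n\cap\partial U=\varnothing$, since every point of $\partial U$ is at distance at least $2\rho$ from $K$. In the other direction, $K\subset K_n^{\rho}$. Pick any point $x\in C$; there is $y\in K_n$ with $|x-y|<\rho$. Shrinking $\rho$ if needed (we may take $\rho<\dist(C,\partial U)$, which is positive because $C\subset U$ is compact), we get $y\in U$.

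The final step traps the component. Let $C_n$ be the connected component of $K_n$ containing $y$. The set $C_n$ is connected, meets $U$ at $y$, and does not meet $\partial U$. Therefore $C_n\subset U$, because otherwise $C_n\cap U$ and $C_n\setminus\myo U$ would form a separation of $C_n$. It follows that $\diam C_n\le\diam U<\delta$, contradicting the assumption on the components of $K_n$.

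The only delicate point is the first step: $K$ may have uncountably many components, so the separating neighbourhood has to come from the quasicomponent argument (components coincide with quasicomponents in compact Hausdorff spaces), which is exactly the cited result from \cite{Al77}. Everything else is routine.
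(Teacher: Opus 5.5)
Your proof is correct and is essentially the paper's argument. You take a neighbourhood $U\subset C^{\eta}$ of the small component with $\partial U\cap K=\varnothing$ (the quasicomponent result of \cite{Al77}), use Hausdorff convergence to place a point of $K_n$ in $U$ while keeping $K_n$ off $\partial U$, and then trap its component inside $U$; your choice $\eta=(\delta-\diam C)/4$ also gives an honest strict bound $\diam U<\delta$, whereas the paper's $\rho=(\delta-\diam F)/2$ only yields $\diam U\le\delta$, so its claimed $\diam U\le\diam F+\rho$ is unjustified and strictness there must come from compactness of the trapped component.
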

\begin{proof}
Let us assume the opposite. Then there exists a connected component $F$ of $K$ such that $\diam F<\delta$. Let $\rho:=(\delta-\diam F)/2>0$. 
Then (see~\cite[Corollary after Theorem~21]{Al77}) there exists a neighbourhood $U$ of $F$ such that $U\subset F^\rho$ and $\partial U\cap K=\varnothing$.
Let $\varepsilon:= \dist(\partial U, K)/2>0$. Then $K^\varepsilon\cap \partial U =\varnothing$. It immediately follows from the definition of convergence in Hausdorff topology that there exists such $n$ that $K\subset K_n^{\varepsilon}$ and $K_n\subset K^{\varepsilon}$. From the first inclusion it follows that $F^\varepsilon\cap K_n\ne\varnothing$. Let $z\in F^\varepsilon\cap K_n$. The connected component $K_{n, z}$ of $z$ in $K_n$ has the diameter not less than $\delta$. Since $z\in U$ and $\diam U\leq\diam F+\rho<\delta$, we have $K_{n, z}\cap\partial U\ne\varnothing$. But $K^\varepsilon\cap \partial U =\varnothing$, and we get the contradiction to $K_n\subset K^{\varepsilon}$.
\end{proof}

\begin{proof}[Proof of Statement~\ref{St_H_cap}]
For any connected compact set $F$ we have (see~\cite[Theoren 5.3.2]{Ra95}) $\diam F/4\leq\mcap F\leq\diam F$.
Therefore the diameter of any connected component of each $K_n$ is not less than $\delta$. Since $K_n\xrightarrow{H}K$, from Lemma~\ref{L_H} we have that the diameter of any connected component of $K$ is also not less than $\delta$. Consequently, the capacity of any connected component of $K$ is not less than $\delta/4$. Replacing $\delta$ by $\delta/4$, further we assume that the capacity of any connected component of $K$ is not less than $\delta$.

Let $\varepsilon\in (0,1)$. Since $K_n\xrightarrow{H}K$, there exists $N\in\NN$ such that $K_n\subset K^{\delta\varepsilon^2/2}$  and $K\subset K_n^{\delta\varepsilon^2/2}$ for all $n>N$. It follows from Corollary~\ref{Cor_dist} that $K^{\delta\varepsilon^2/2}\subset \myh{(K)_\varepsilon}$ and $K_n^{\delta\varepsilon^2/2}\subset \myh{(K_n)_\varepsilon}$. Since 
\begin{equation*}
g_{\myh\CC\setminus\myh{(K)_\varepsilon}}(z) =\max\{g_{\myh\CC\setminus K}(z)-\log(1+\varepsilon), 0\},
\end{equation*}
it follows from the definitions~\eqref{E(K)},~\eqref{V^lambda},~\eqref{g(z)} that $\mcap \myh{(K)_\varepsilon}=(1+\varepsilon)\mcap K$. Similarly, $\mcap \myh{(K_n)_\varepsilon}=(1+\varepsilon)\mcap K_n$. Since $K_n\subset K^{\delta\varepsilon^2/2}\subset \myh{(K)_\varepsilon}$, we have $\mcap K_n\leq\mcap \myh{(K)_\varepsilon}=(1+\varepsilon)\mcap K$ for all $n>N$. Similarly, $\mcap K\leq\mcap \myh{(K_n)_\varepsilon}=(1+\varepsilon)\mcap K_n$ for all $n>N$. Thus
\begin{equation*}
(1+\varepsilon)^{-1}\mcap K\leq\mcap K_n\leq(1+\varepsilon)\mcap K.
\end{equation*}
Tending $\varepsilon$ to 0, we get the statement.
\end{proof}

The following simple Proposition~\ref{Prop_H} shows that the space of all admissible compact sets $\KKf$ for any fixed germ $f_\infty$ is closed (up to taking the polynomially convex hull) in Hausdorff topology. We need to take the polynomially convex hull just because of the definition~\ref{Def_KKf} of $\KKf$.

\begin{Proposition}
\label{Prop_H}
Let $f_\infty$ be an analytic germ at $\infty$ and $K_n\in\KKf$ be such that $K_n\xrightarrow{H}K$ for some compact set $K\subset\CC$. Then $\myh K\in\KKf$.
\end{Proposition}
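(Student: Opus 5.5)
To prove Proposition~\ref{Prop_H}, I first note that $\myh\CC\setminus\myh K=K_\infty$ is connected by definition, so it remains to show $f_\infty\in\MM(K_\infty)$. That is, I must show that $f_\infty$ can be meromorphically continued along every path $\gamma\subset K_\infty$ starting at $\infty$, and that every closed path $\gamma\subset K_\infty$ from $\infty$ to $\infty$ returns the germ $f_\infty$. Since $K_\infty$ is a domain, single-valuedness along loops together with continuability along all paths gives a single-valued meromorphic function there, by the monodromy-type argument.

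The key step is to move $\gamma$ off $K$. Fix a path $\gamma:[0,1]\to K_\infty$ with $\gamma(0)=\infty$. Since $\gamma([0,1])\setminus\{\infty\}$ lies at positive distance from $K$ on the compact part, I can modify $\gamma$ near $\infty$ so that it stays outside a large disc containing all $K_n$. Here the $K_n$ are eventually contained in $K^1$, hence in a fixed disc, because $K_n\xrightarrow{H}K$. Then let $\varepsilon:=\dist(\gamma\cap\CC,K)>0$, suitably interpreted with the portion of $\gamma$ near $\infty$ handled separately. By Hausdorff convergence there is $N$ with $K_n\subset K^{\varepsilon/2}$ for all $n>N$. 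Then $\gamma\cap K_n=\varnothing$, so $\gamma\subset\myh\CC\setminus K_n$ for some fixed $n>N$.

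Since $K_n\in\KKf$, the germ $f_\infty$ extends to a single-valued meromorphic function $f_n$ on $\myh\CC\setminus K_n$. Hence continuation of $f_\infty$ along $\gamma$ exists; it is just $f_n$ restricted to a neighbourhood of $\gamma$. If $\gamma$ is closed, the continuation returns to $f_\infty$, since $f_n$ is single-valued. This gives continuability along every path in $K_\infty$ with trivial monodromy on loops, so $f_\infty\in\MM(K_\infty)=\MM(\myh\CC\setminus\myh K)$.

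The main obstacle is only a technical point: handling the point $\infty$ on the path together with the fact that the continuation near $\infty$ is the original germ. Both are dealt with by the fixed disc $\myo\DD(0,R')$ containing $K$, all $K_n$ for large $n$, and the singularities of the germ's initial disc of meromorphy. Outside this disc all the $f_n$ coincide with $f_\infty$. One should also remark that the continuation obtained does not depend on the chosen $n$, since all $f_n$ agree with $f_\infty$ near $\infty$ and the continuation along $\gamma$ is unique. Possible poles are harmless, since all continuations are meromorphic.
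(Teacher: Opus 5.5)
Your proof is correct and is essentially the paper's argument. The paper argues by contradiction from a loop $\gamma\subset K_\infty$ with bad continuation, while you argue directly, but the key step is the same: take $\varepsilon$ comparable to $\dist(K,\gamma)$, pick $n$ with $K_n\subset K^{\varepsilon}$, so that $\gamma\subset\myh\CC\setminus K_n$, where $f_\infty$ is single-valued meromorphic. Your modification of $\gamma$ near $\infty$ is unnecessary: since $K$ is bounded and $\gamma$ is compact in $\myh\CC\setminus K$, already $\dist(\gamma\cap\CC,K)>0$, so the point $\infty$ needs no separate treatment.
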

\begin{proof}
Let us suppose that $\myh K\notin\KKf$. Then there exists a closed path $\gamma\subset K_\infty$ with the initial and the final points at $\infty$ such that either the analytic continuation of $f_\infty$ along $\gamma$ is impossible or it gives a germ different from $f_\infty$. Since $K_n\in\KKf$, we have $K_n\cap\gamma\ne\varnothing$ for all $n$. Since $K_n\xrightarrow{H}K$, there exists $n_0$ such that $K_{n_0}\subset K^\varepsilon$ for $\varepsilon=\dist(K, \gamma)/2$. Consequently, $K_{n_0}\cap \gamma=\varnothing$. Contradiction.
\end{proof}

The following Lemma~\ref{L_cut} shows (for details see \S\ref{s4}) that in Theorem~\ref{t1} we can consider only compact sets $K$ such that $K\supset E$ and all  connected components of $K$ intersect $E$. 

\begin{Lemma}
\label{L_cut}
Let $E$ be a compact set in $\CC$. Let $f_\infty\in\AA(\myh\CC\setminus E)$. Let $K\in\KKf$ and $K\supset E$. 
Then the union $F$ of all connected components of $K$ that intersect $E$ also belongs to $\KKf$.
\end{Lemma}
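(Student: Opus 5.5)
The plan is to show directly that $f_\infty$ continues along every closed path in $\myh\CC\setminus F$ starting at $\infty$ and returns to $f_\infty$. Here the hypothesis $f_\infty\in\AA(\myh\CC\setminus E)$ does the main work: continuation along any path in $\myh\CC\setminus E\supset\myh\CC\setminus K$ is always possible. So the only issue is single-valuedness, i.e.\ that no closed path in $\myh\CC\setminus F$ produces monodromy.

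\textbf{Topology of $F$.} First check that $F$ is compact. Given $z\in K\setminus F$, the component $K_z$ of $z$ in $K$ is disjoint from the compact set $E$. By \cite[Corollary after Theorem~21]{Al77} there is a neighbourhood $U$ of $K_z$ with $\partial U\cap K=\varnothing$ and $U\cap E=\varnothing$. Then $K\cap U$ is relatively clopen in $K$, so every component of $K$ meeting $U$ lies inside $U$ and hence misses $E$. Thus $K\setminus F$ is relatively open and $F$ is closed. Connectedness of $\myh\CC\setminus F$ follows from that of $\myh\CC\setminus K$ exactly as in the proof of Proposition~\ref{Prop_D}: $F$ is a union of components of $K$, and any bounded complementary component of $F$ would have to contain points of $K\setminus F$, which would then be cut off from $\infty$ in $\myh\CC\setminus K$.

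\textbf{Reducing to a single loop.} Suppose some closed path $\gamma\subset\myh\CC\setminus F$ based at $\infty$ gives nontrivial monodromy. By compactness of $\gamma$ and the argument above, cover $K\setminus F$ near $\gamma$ by finitely many relatively clopen pieces $K_1,\dots,K_m$ of $K$, each disjoint from $E$. Using Lemma~\ref{L0}-type neighbourhoods, enclose each piece by a closed Jordan curve $\beta_j$ with $\beta_j\cap K=\varnothing$ and with the interior of $\beta_j$ disjoint from $E\cup F$. Since continuation in $\myh\CC\setminus E$ is unrestricted, we can deform $\gamma$ homotopically inside $\myh\CC\setminus(E\cup F)$ so that it avoids $K$ except for excursions into the interiors of the $\beta_j$. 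Each excursion can then be pushed onto the curve $\beta_j$ itself, at the cost of winding around it.

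\textbf{The key step, and main obstacle.} We must show that the monodromy around each $\beta_j$ is trivial. This holds because the interior $D_j$ of $\beta_j$ contains no point of $E$, so $D_j\cap(\myh\CC\setminus E)=D_j$ is simply connected. Continuation of the germ at a point of $\beta_j$ (reached from $\infty$ by a path outside $K$) along paths inside $\overline{D_j}$ is therefore single-valued by the monodromy theorem. Hence $\gamma$ is equivalent, for continuation purposes, to a path in $\myh\CC\setminus K$, where the monodromy is trivial because $K\in\KKf$. This gives a contradiction.

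The delicate part is making the deformation rigorous when $K\setminus F$ has infinitely many components accumulating on $F$. I would handle this by first choosing a neighbourhood $U$ of $F$ with $\partial U\cap K=\varnothing$ and $\gamma\cap\overline U=\varnothing$. Then only the compact set $K\setminus U$, which is disjoint from $E$, matters, and it is covered by finitely many Jordan domains $D_j$ missing $E$, via a finite subcover as in Lemma~\ref{L2}.
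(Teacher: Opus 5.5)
Your proof is correct, and its core mechanism is the paper's: enclose the components of $K$ that miss $E$ in Jordan domains whose closures avoid $E$ and whose boundaries avoid $K$. Then push the loop out of each domain onto its boundary. This does not change the continuation, because the deformation stays in $\myh\CC\setminus E$, where homotopy invariance holds since $f_\infty\in\AA(\myh\CC\setminus E)$.

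The organization differs. The paper chooses the domains independently of any loop. It takes a countable subcover $U_1,U_2,\dots$ of $K\setminus F$ (Lindel\"of) and proves $F_n=K\setminus\bigcup_{k\le n}U_k\in\KKf$ by induction, one domain at a time. It then passes to $F=\bigcap_n F_n$ via Hausdorff convergence and Proposition~\ref{Prop_H}; connectedness of $\myh\CC\setminus F$ comes free from the decreasing intersection. You fix the loop first and use compactness of $\gamma$ and of the clopen piece $K\setminus U$. Only finitely many domains occur, and Proposition~\ref{Prop_H} is in effect inlined. This is more direct, but it leaves three points to write out.

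\emph{First, the neighbourhood $U$.} The neighbourhood $U$ of the whole set $F$ with $\partial U\cap K=\varnothing$ and $\overline U\cap\gamma=\varnothing$ is not literally the cited corollary of \cite{Al77}, which concerns a single component. Apply it to each component of $F$ inside $F^\rho$, with $\rho<\dist(\gamma,F)$. Then take a finite subcover of the compact set $F$ by the resulting relatively clopen subsets of $K$.

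\emph{Second, overlapping domains and infinitely many excursions.} The $D_j$ may overlap, and a loop may make infinitely many excursions into a $D_j$. Approximate $\gamma$ by a piecewise linear loop. Process the $D_j$ one at a time by radial projection from a point of $D_j$ off the current loop, as the paper does. The final loop lies in $(\gamma\setminus\bigcup_j D_j)\cup\bigcup_j\partial D_j$. This misses $K$, since $\gamma\cap K\subset K\setminus U\subset\bigcup_j D_j$.

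\emph{Third, connectedness.} Your connectedness argument as phrased does not close; it needs this final step. A bounded component $W$ of $\myh\CC\setminus F$ contains no point of $\myh\CC\setminus K$, since such points are joined to $\infty$ inside $\myh\CC\setminus K\subset\myh\CC\setminus F$. Hence $W\subset K\setminus F$. But then $\overline W$ is a connected subset of $K$ meeting $F$, because $\partial W\subset F$ and $\partial W\ne\varnothing$. So $\overline W$ lies in a component of $K$ contained in $F$, which contradicts $W\cap F=\varnothing$.
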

\begin{proof}
First of all, we show that $F$ is a compact set. Let us assume the opposite. Then there exists a sequence of points $x_n\in F$, $n\in\NN$, such that $x_n\to x_0$ and $x_0\in K\setminus F$. Let $K_0$ be the connected component of $x_0$ in $K$. Then $\varepsilon:=\dist(K_0, E)>0$. There exists (see~\cite[Corollary after Theorem~21]{Al77}) a neighbourhood $U$ of $K_0$ such that $ U\subset K_0^{\varepsilon/2}$ and $\partial U\cap K = \varnothing$. Since $x_n\to x_0$, there exists $n_0$ such that $x_{n_0}\in U$. Since $x_{n_0}\in F$, its connected component $F_{n_0}$ intersects $E$. But $x_{n_0}\in U$ and $E\cap\myo U=\varnothing$ by construction. Therefore $F_{n_0}\cap\partial U\ne\varnothing$. Contradiction.

Now let us prove the statement of the Lemma. Let $\GGG$ be the set of all connected components of $K$ that don't intersect $E$. Put $G:=K\setminus F=\cup_{G_\alpha\in\GGG}G_\alpha$. Of course, $\GGG$ may be uncountable and G may be not a compact set.
For each $G_\alpha\in\GGG$, we have $\dist(G_\alpha, F)>0$. 
Applying Lemma~\ref{L0}, we choose a simply connected neighbourhood $V_\alpha$ of $G_\alpha$ such that $\myo{ V_\alpha}\cap F=\varnothing$. 
Let $\rho_\alpha:=\dist(F, \partial V_\alpha)>0$.
Applying again~\cite[Corollary after Theorem~21]{Al77} we choose a neighbourhood $W_\alpha$ such that $\myo{W_\alpha}\subset G_\alpha^{\rho_\alpha/2}$, $G_\alpha\subset W_\alpha\subset V_\alpha$ and $\partial W_\alpha\cap K=\varnothing$.
Let $\delta_\alpha:=\dist(\partial W_\alpha, K)>0$. 
Put $\varepsilon_\alpha=\min\{\rho_\alpha, \delta_\alpha\}$.
Let $\Phi_{\alpha,k}$, $k=1, \dots, N_\alpha$, be a finite subcover of $\varepsilon_\alpha/2$-cover $\partial W_\alpha$. Then the exterior boundary of the closure $\myo{\bigcup_{k=1}^{N_\alpha}\Phi_{\alpha,k}}$ is a closed Jordan curve $\gamma_\alpha$. Let us denote the bounded connected component of $\myh\CC\setminus\gamma_\alpha$ by $U_\alpha$. We have that $U_\alpha\subset V_\alpha$, $\myo{U_\alpha}\cap F = \varnothing$ and $\dist(\gamma_\alpha, K)\geq \varepsilon_\alpha/2$.
The union of $U_\alpha$ for all $G_\alpha\in\GGG$ is an open cover of $G$. Then there exists a countable subcover $U_n, n\in\NN$,  of $G$, because any separable metric space, in particular the complex plane, is a Lindel\"of space. For each $n\in\NN$ we put 
\begin{equation*}
F_n:= K\setminus \bigcup\limits_{k=1}^n U_k
\end{equation*}
and $F_0:=K$. Since $\partial U_n\cap K=\varnothing$, all $F_n$ are compact sets. Moreover, $F_n\supset F_{n+1}$ and $F=\cap_{n=0}^\infty F_n$. 

Let us prove by induction that $F_n\in\KKf$ for all $n\in\NN$. For $n=0$ we have $F_0=K\in\KKf$. Let us assume that $F_n\in\KKf$ for some $n\in\NN\cup 0$ and show that $F_{n+1}\in\KKf$. Indeed. 
Since $\myh\CC\setminus F_n$ is connected and $\partial U_{n+1}\cap F_n=\varnothing$, the complement of $F_{n+1}=F_n\setminus U_{n+1}$ is also connected. 
Let $\gamma$ be an arbitrary closed path in $\myh\CC\setminus F_{n+1}$ with the initial and the final points at $\infty$. 
In particular, $\gamma\subset\myh\CC\setminus E$.
Therefore there exists the analytic continuation of $f_\infty$ along $\gamma$. We need to show that the analytic continuation of $f_\infty$ along $\gamma$ gives $f_\infty$.
Let us choose a point $z_0\in U_{n+1}\setminus\gamma$.
Since $U_{n+1}$ is simply connected, by Riemann mapping theorem there exists a conformal map $\varphi$ of $U_{n+1}$ onto the unit disc $\DD(0,1)$ that maps $z_0$ to 0. By Caratheodory theorem, $\varphi$ extends to the homeomorphism of $\myo{U_{n+1}}$ and $\myo\DD(0,1)$. Let $\myt\gamma:=\gamma\cap \myo{U_{n+1}}$ and $\pr(\varphi(\myt\gamma))$ be the radial projection of $\varphi(\myt\gamma)$ on the unit circle $\TT(0,1)$. 
Let $\breve\gamma$ be the closed path with the initial and the final points at $\infty$ that coincides with $\gamma$ in $\myh\CC\setminus\myo{U_{n+1}}$ and with $\phi^{-1}(\pr(\varphi(\myt\gamma)))$ on $\gamma_{n+1}=\partial U_{n+1}$. We have $\breve\gamma\subset\myh\CC\setminus F_n$ by construction. Consequently, there exists the analytic continuation of $f_\infty$ along $\breve\gamma$ and as a result of it we obtain the same germ $f_\infty$. 
Since $\myo{U_{n+1}}\cap E=\varnothing$ and $U_{n+1}$ is simply connected, 
for any germ $f_{\xi}$ at any point $\xi\in\myo{U_{n+1}}$ that is the analytic continuation of $f_\infty$ along some path in $\myh\CC\setminus E$
there exists (single-valued) meromorphic continuation of $f_{\xi}$ on whole $\myo{U_{n+1}}$. Consequently, since $\gamma$ and $\breve\gamma$ coincide outside $\myo{U_{n+1}}$, the analytic continuations of $f_\infty$ along $\breve\gamma$ and $\gamma$ coincide. Therefore the analytic continuation of $f_\infty$ along $\gamma$ gives $f_\infty$.

So, $F_n\in\KKf$ for all $n\in\NN$.
Since $F_n\supset F_{n+1}$, $F=\cap_{n=0}^\infty F_n$ and $\myh\CC\setminus F_n$ is connected, we have $F_n\xrightarrow{H} F$ and $\myh\CC\setminus F$ is also connected. Thus, from Proposition~\ref{Prop_H} it immediately follows that $F\in\KKf$.
\end{proof}

It is well known that capacity is not semi-additive with any constant, see classical example~\cite[Corollary~5.2.6]{Ra95}. 
The following Lemma~\ref{L_cap} gives in some sense the property of continuity of capacity.

\begin{Lemma}
\label{L_cap}
For any $\beta\in(0,1)$ and $\varepsilon>0$ we put 
\begin{equation*}
\delta=\delta(\varepsilon,\beta):=e^{-\frac{\log^2\beta}{\log{(1+\varepsilon)}}}>0.
\end{equation*}
Let $A, B\subset\DD(0,1)$ be two compact sets such that $\mcap A<\delta$ and $\mcap B>\beta$. Then $\mcap(A\cup B)\leqslant (1+\varepsilon)\mcap B$.
\end{Lemma}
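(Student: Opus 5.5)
The plan is to reduce the lemma to the classical subadditivity of the reciprocal of the logarithmic energy, as in \cite[Theorem~5.1.4]{Ra95}. For compact sets $A,B$ in the unit disc the inequality we want is
\begin{equation*}
\frac{1}{\log\frac{1}{\mcap(A\cup B)}}\leq\frac{1}{\log\frac{1}{\mcap A}}+\frac{1}{\log\frac{1}{\mcap B}}.
\end{equation*}
Once this is available, the lemma is a short computation with exactly the stated $\delta$. The normalisation in \cite{Ra95} involves the diameter, so the main obstacle is justifying this form without a $\log 2$ loss.

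First I would dispose of the trivial case. If $\mcap A=0$, then $A$ is polar, so $\mcap(A\cup B)=\mcap B$ and there is nothing to prove. So assume $\mcap A>0$, and put
\begin{equation*}
x:=\log\frac{1}{\mcap B},\qquad y:=\log\frac{1}{\mcap A},\qquad l:=\log\frac1\beta>0.
\end{equation*}
Since $A,B\subset\DD(0,1)$, both capacities are $<1$, so $x,y>0$. The hypotheses give $x<l$ and
\begin{equation*}
y>-\log\delta=\frac{l^2}{\log(1+\varepsilon)}.
\end{equation*}
The displayed inequality yields $\log\frac{1}{\mcap(A\cup B)}\geq \frac{xy}{x+y}$, which rearranges as
\begin{equation*}
\frac{xy}{x+y}=x-\frac{x^2}{x+y}\geq x-\frac{x^2}{y}> x-\frac{l^2}{y}> x-\log(1+\varepsilon).
\end{equation*}
Exponentiating gives $\mcap(A\cup B)<(1+\varepsilon)\mcap B$, which is the claim. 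So everything rests on the displayed inequality.

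To prove that inequality I would follow the energy argument of \cite{Ra95}. Let $\mu$ be the equilibrium measure of $A\cup B$, and split it as $\mu=\mu|_A+\mu|_{B\setminus A}$, with masses $a$ and $b=1-a$. Work with the kernel $k(z,w)=\log\frac{1}{|z-w|}$. If $k\geq0$ on the relevant set, positivity of the kernel gives
\begin{equation*}
\log\frac1{\mcap(A\cup B)}=\iint k\,d\mu\,d\mu\geq\int_A\!\!\int k\,d\mu\,d\mu\ \ \mbox{and}\ \ \int_B\!\!\int k\,d\mu\,d\mu .
\end{equation*}
Next I would compare $\int_A\int k\,d\mu\,d\mu$ with $a\cdot$(energy of $A$). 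For this one uses that the potential of $\mu$ is constant n.e. on $A\cup B$: normalise $\mu|_A/a$, test it against the equilibrium constant of $A$, and obtain $a\,y\leq\log\frac1{\mcap(A\cup B)}$ and $b\,x\leq\log\frac1{\mcap(A\cup B)}$. Dividing and adding, $a+b=1$ gives the reciprocal subadditivity.

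The step I expect to be the main obstacle is the positivity $k\geq0$. For $z,w\in\DD(0,1)$ we only have $|z-w|<2$, so $k$ may be negative. The naive rescaling to a disc of diameter $1$ reintroduces exactly the $\log 2$ that the stated $\delta$ does not contain. I would first try to avoid it by replacing $k$ with the Green kernel of a disc, $\log\frac{|1-\bar w z|}{|z-w|}\geq0$, which is positive definite and differs from $k$ by a harmonic term. I would then check whether the resulting Green-capacity inequality transfers back to logarithmic capacity in the needed direction.

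If that transfer fails, I would instead check whether the paper's later use of Lemma~\ref{L_cap} only needs the sets inside a disc of diameter $1$. If it does, I would apply \cite[Theorem~5.1.4]{Ra95} verbatim with $d\leq1$, and the computation above goes through unchanged.
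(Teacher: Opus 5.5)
Your reduction and your closing computation are the same as the paper's. The paper also writes $1/\gamma_{A\cup B}\le 1/\gamma_A+1/\gamma_B$, citing \cite[Theorem~5.1.4]{Ra95}, and then bounds $\gamma_B^2/(\gamma_A+\gamma_B)\le\gamma_B^2/\gamma_A<\log(1+\varepsilon)$.

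The obstacle you flag is genuine, and it is exactly the step the paper leaves unjustified. With the normalisation $d=1$, that inequality needs $\diam(A\cup B)\le 1$, and $A,B\subset\DD(0,1)$ does not give this. Your Green-kernel attempt cannot close the gap, because for sets in $\DD(0,1)$ the reciprocal inequality, and in fact the lemma itself, is false.

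To see this, take disjoint discs $A=\myo\DD(c_A,r_A)$ and $B=\myo\DD(c_B,r_B)$ in $\DD(0,1)$, and put $D:=|c_A-c_B|$. Test the energy of $A\cup B$ with $t\lambda_A+(1-t)\lambda_B$. The mixed term equals $\log D$ exactly, since $V^{\lambda_B}(z)=\log|z-c_B|$ off $B$. Optimising in $t$ gives
\begin{equation*}
\log\frac{\mcap(A\cup B)}{\mcap B}\geq\frac{b^2}{a+b},\qquad a:=\log\frac{D}{r_A},\quad b:=\log\frac{D}{r_B}.
\end{equation*}
Now choose $\beta=0.01$ and $\log(1+\varepsilon)=0.01$, so $\delta=e^{-2120.76\ldots}$. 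Take $c_B=0.98$, $r_B=0.0101$, $c_A=-0.98$ and $r_A=e^{-2121}$. All hypotheses hold, but $b^2/(a+b)\approx 27.75/2126.9\approx 0.013>\log(1+\varepsilon)$. Hence $\mcap(A\cup B)>(1+\varepsilon)\mcap B$.

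The mechanism is general. With $r_A$ just below $\delta$ and $\varepsilon\to0$, the ratio of $b^2/(a+b)$ to $\log(1+\varepsilon)$ tends to $b^2/\log^2\beta$. This exceeds $1$ as soon as $D>1$ and $r_B$ is close to $\beta$.

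So your fallback is not a workaround but the necessary correction. Your energy argument proves the lemma verbatim under the extra hypothesis $\diam(A\cup B)\le 1$. More generally, if $\diam(A\cup B)\le d$ and $\beta<d$, run the same computation with $x=\log(d/\mcap B)$ and $y=\log(d/\mcap A)$; it works with $\delta:=d\,e^{-\log^2(\beta/d)/\log(1+\varepsilon)}$.

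This corrected version is all that \S\ref{s4} uses, namely that $\mcap K_{n,k}\le(1+\varepsilon)\mcap K_n$ holds uniformly in $n$ for large $k$. Note, however, that there $K_{n,k}=K_n\cup\myo{E^k}$ lies only in $\DD(0,2/3)$, whose diameter is $4/3$. The initial rescaling must therefore be tightened, for example to $r=1/6$, which puts everything in $\DD(0,1/2)$; this is harmless.
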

\begin{proof}
Let $\gamma_A,\gamma_B,\gamma_{A\cup B}$ be the Robin constant of $A, B, A\cup B$, respectively. Since $A, B\in\DD(0,1)$ the following inequality holds true (see~\cite[Theorem~5.1.4]{Ra95}): $1/\gamma_{A\cup B}\leq1/\gamma_A+1/\gamma_B$. Therefore $\gamma_{A\cup B}\geq \frac{\gamma_A\gamma_B}{\gamma_A+\gamma_B}$. So, we have
\begin{equation*}
\mcap (A\cup B)=e^{-\gamma_{A\cup B}}\leq e^{-\frac{\gamma_A\gamma_B}{\gamma_A+\gamma_B}}=e^{-\gamma_B}e^{\frac{\gamma_B^2}{\gamma_A+\gamma_B}}=\mcap B\cdot e^{\frac{\gamma_B^2}{\gamma_A+\gamma_B}}
\end{equation*}
Since $A, B\subset\DD(0,1)$, we have $\gamma_A,\gamma_B,\gamma_{A\cup B}>0$. Therefore
\begin{equation*}
\mcap (A\cup B)\leq\mcap B\cdot e^{\frac{\gamma_B^2}{\gamma_A}}\leq \mcap B\cdot e^{(-\log \beta)^2\cdot\frac{\log(1+\varepsilon)}{\log^2\beta}}=(1+\varepsilon)\mcap B.
\end{equation*}
\end{proof}

\section{Proof of Theorem~\ref{t1}}
\label{s4}

Recall that we denote $c_0:=\inf\limits_{K\in\KKf}\mcap K$. If $c_0=0$ the statement of Theorem~\ref{t1} immediately follows from Statement~\ref{St_c_0}. Therefore, further we assume $c_0>0$.
Let $K_n, n\in\NN$, be a minimizing sequence, i.e. $K_n\in\KKf$ and
\begin{equation}
\label{c_00}
\lim_{n\to\infty}\mcap K_n = \inf\limits_{K\in\KKf}\mcap K =: c_0>0.
\end{equation}
From Corollary~\ref{Cor_D} it follows that we can assume that all $K_n$ and $E$ belong to some big disc $\DD(0,r)$. Using scaling, we assume that $r=1/3$, i. e. $K_n\subset\DD(0,1/3)$ for all $n\in\NN$ and $E\subset\DD(0,1/3)$. 
Since $\mcap E=0$, we have $\mcap K_n=\mcap(K_n\cup E)$. Therefore we assume that $K_n\supset E$ for all $n\in\NN$. Now, using Lemma~\ref{L_cut} we assume that all connected components of each $K_n$ intersect $E$.

For each $k\in\NN$ we denote the closure of $(1/3k)$-neighbourhood of $E$ by $\myo{E^k}$, i. e.
\begin{equation*}
\myo{E^k}: =\{z\in\myh\CC: \dist(z, E)\leq 1/3k\}.
\end{equation*}
We denote $K_{n,k}:=K_n\cup \myo{E^k}$. Let $\myh {K_{n,k}}$ be the polynomially convex hull of $K_{n,k}$, i.e. the complement to the unbounded connected component of $\myh\CC\setminus K_{n,k}$. Since $\myh\CC\setminus\myh {K_{n,k}}$ is connected and $\myh {K_{n,k}}\supset K_n$, we have $\myh {K_{n,k}}\in\KKf$.
Since the set of all non-empty compact subsets of any compact set (in our case of $\myo\DD(0,2/3)$) is compact in Hausdorff topology, using a diagonal process we can choose such subsequence $\Lambda$ of $\NN$ that for each $k\in\NN$
\begin{equation*}
\myh {K_{n,k}}\xrightarrow{H} F_k, \quad n\in\Lambda,
\end{equation*}
where $F_k$ are some compact sets in $\myo\DD(0,2/3)$.
From Proposition~\ref{Prop_H} it follows that $\myh{F_k}\in\KKf$, where $\myh{F_k}$ is the polynomially convex hull of $F_k$.
Since $\myh {K_{n,k}}\supset\myh {K_{n,k+1}}$, we have $F_k\supset F_{k+1}$.
Since all connected components of $K_n$ intersect $E$, each connected component of $K_{n,k}$ contains a disc of radius $1/3k$. Therefore the capacity of any connected component of $\myh{K_{n, k}}$ is not less than $1/3k$. Applying Statement~\ref{St_H_cap}, we get that for each $k\in\NN$ there exists 
\begin{equation}
\label{lim_Knk}
\lim\limits_{n\in\Lambda}\mcap \myh{K_{n, k}} = \mcap F_k =\mcap \myh{F_k}.
\end{equation}
Let us denote
\begin{equation*}
F:=\bigcap\limits_{k=1}^\infty \myh{F_k}.
\end{equation*}
Since $\myh\CC\setminus\myh{F_k}$ is connected, we also have that $\myh\CC\setminus F$ is connected, i.e. $F=\myh F$. It immediately follows from the definition of $F$ that $\myh F_k\xrightarrow{H} F$. Therefore applying again Proposition~\ref{Prop_H}, we get $F\in\KKf$. 

Let us show that $\mcap F =c_0$. Let us choose arbitrary $\varepsilon>0$ and apply Lemma~\ref{L_cap} for $\varepsilon$ and $\beta=c_0$. Let $\delta=\delta(\varepsilon, c_0)$ from Lemma~\ref{L_cap}. Thus, $\delta=\delta(\varepsilon)$ depends only on $\varepsilon$, because $c_0$ is fixed. Since $\myo{E_k}\supset \myo{E_{k+1}}$, $E=\bigcap_{k=1}^{\infty} \myo{E^k}$ and $\myo{E^k}$ are compact sets, there exists (see~\cite[Theorem 5.1.3]{Ra95})
\begin{equation*}
\lim\limits_{k\to\infty}\mcap \myo{E^k} = \mcap E =0.
\end{equation*}
Consequently, there exists $K(\varepsilon)$ such that $\mcap \myo{E^k}<\delta(\varepsilon)$ for all $k\geq K(\varepsilon)$. Since $\mcap K_n>c_0$, from Lemma~\ref{L_cap} it follows that
\begin{equation*}
\mcap\myh{K_{n,k}}=\mcap K_{n,k} \leq (1+\varepsilon) \mcap K_n
\end{equation*}
for $n\in\NN$ and $k\geq K(\varepsilon)$. Applying~\eqref{c_00} and~\eqref{lim_Knk}, we get $\mcap \myh{F_k}\leq (1+\varepsilon)c_0$ for all $k\geq K(\varepsilon)$.
Since $F=\bigcap\limits_{k=1}^\infty \myh{F_k}$, we have $\mcap F\leq(1+\varepsilon) c_0$. Due to the arbitrary choice of $\varepsilon>0$, we obtain $\mcap F = c_0$.

{\bf Aleksandr V. ~Komlov}

Steklov Mathematical Institute of RAS, 

Moscow, Russia

{\it E-mail}: komlov@mi-ras.ru

\end{document}